\theoremstyle{plain}
\newtheorem{thm}{Theorem}[section]
\newtheorem{lem}{Lemma}[section]
\theoremstyle{definition}
\newtheorem{defn}{Definition}[section]
\numberwithin{equation}{section}
\def\ps@pprintTitle{%
  \let\@oddhead\@empty
  \let\@evenhead\@empty
  \def\@oddfoot{\reset@font\hfil\thepage\hfil}
  \let\@evenfoot\@oddfoot
}
\begin{document}
\begin{frontmatter}
\title{A series of trees with the first  $\lfloor\frac{n-7}{2}\rfloor$ largest  energies\tnoteref{title1}}
\tnotetext[title1]{Supported by NSF of China No.10731040,  No.11101088, No.11026147 and No.11101263}
\author[a]{Hai-Ying Shan}
\ead{shan\_haiying@tongji.edu.cn}
\author[a]{Jia-Yu Shao\corref{cor1}}
\ead{jyshao@tongji.edu.cn}
\author[a]{Li Zhang}
\ead{lizhang@tongji.edu.cn}
\author[b]{Chang-Xiang He}
\ead{changxianghe@hotmail.com}
\cortext[cor1]{Corresponding author.}

\address[a]{Department of Mathematics,  Tongji University,  Shanghai  200092,  China}
\address[b]{College of Science, University of Shanghai for Science and Technology, Shanghai, 200093, China}
\date{}
\begin{abstract}

The energy of a graph is defined as the sum of the absolute values of the eigenvalues of the graph.  In this paper,  we present a new method to compare the energies  of two $k$-subdivision bipartite graphs on some cut edges.  As the applications of this new method, we determine the first $\lfloor\frac{n-7}{2}\rfloor$ largest  energy trees of order $n$ for $n\ge 31$, and we also give a simplified proof of the conjecture on the fourth maximal energy tree.

 \vskip3pt \noindent{\it{AMS classification:}} 05C50; 05C35
\end{abstract}
\begin{keyword}
Energy; Tree; Bipartite graph; Subdivision graph; Recurrence relation.
\end{keyword}

\end{frontmatter}

\section{Introduction}

Let $G$ be a graph with $n$ vertices and $A$ be its adjacency
matrix. Let $\lambda_{1}, \cdots, \lambda_{n}$ be the eigenvalues of
$A$,  then the $energy$ of $G$,  denoted by $\mathbb{E}(G)$,  is  defined
\cite{gutman1978eg, gutman2001ego} as
$\mathbb{E}(G)=\displaystyle\sum_{i=1}^{n}|\lambda_{i}|$.

The characteristic polynomial $\det(x I-A)$ of the adjacency matrix
$A$ of a graph $G$ is also called the characteristic polynomial of
$G$,  written as $\phi(G, x) = \sum\limits_{i=0}^na_i(G)x^{n-i}$.

In this paper,  we write  $b_i(G)= |a_i(G)|$,  and also write $$\widetilde{\phi}(G, x) = \sum\limits_{i=0}^n b_i(G)x^{n-i}.$$

If $G$ is a bipartite graph,  then it is well known that $\phi(G, x)$ has the form

\begin{equation}
\phi(G, x) = \sum\limits_{i=0}^{\lfloor \frac{n}{2} \rfloor} a_{2i}(G)x^{n-2i}=\sum\limits_{i=0}^{\lfloor \frac{n}{2} \rfloor} (-1)^{i}b_{2i}(G)x^{n-2i}\label{equ1.1}
\end{equation}

\noindent and thus
\begin{equation}\label{equ1.2}
\widetilde{\phi}(G, x) = \sum\limits_{i=0}^{\lfloor \frac{n}{2} \rfloor} b_{2i}(G)x^{n-2i}.\qquad(b_{2i}(G)= |a_{2i}(G)|=(-1)^{i}a_{2i})
\end{equation}
In case $G$ is a forest,  then $b_{2i}(G)= m(G, i)$,  the number of $i$-matchings of $G$.

The following integral formula by Gutman and Polansky (\cite{gutman1986mathematical}) on the difference of the energies of two graphs is the starting point of this paper.

\begin{equation}\label{equ1.3}
\mathbb{E}(G_1)-\mathbb{E}(G_2)=\frac{1}{\pi}\int\limits_{-\infty}^{+\infty}\ln \left|\frac{\phi(G_1, ix)}{\phi(G_2, ix)}\right|\text{d} x \qquad \qquad (i=\sqrt{-1})
\end{equation}

Now suppose again that  $G$ is a  bipartite graph of order $n$. Then  by (\ref{equ1.1}) and (\ref{equ1.2}) we have
\begin{equation}\label{equ1.4}
\phi(G, i x)=i^{n}\widetilde{\phi}(G, x) \text{\qquad\qquad ($G$ is bipartite, $i=\sqrt{-1}$) }
\end{equation}

Using (\ref{equ1.4}) we can derive the following new formula from (\ref{equ1.3}) which does not involve the complex number \nolinebreak  $i$.

 \begin{thm}\label{thm1.1}
 If $G_1, G_2$ are both bipartite graphs of order $n$,  then we have
 \begin{equation}\label{equ1.5}
 \mathbb{E}(G_1)-\mathbb{E}(G_2)=\frac{2}{\pi}\int\limits_{0}^{+\infty}\ln \frac{\widetilde{\phi}(G_1, x)}{\widetilde{\phi}(G_2, x)}{\rm{d}} x
 \end{equation}
  \end{thm}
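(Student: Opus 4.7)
The plan is to start directly from the Gutman--Polansky integral formula (\ref{equ1.3}) and substitute in the bipartite identity (\ref{equ1.4}). For $j=1,2$, equation (\ref{equ1.4}) gives $\phi(G_j,ix)=i^{n}\widetilde{\phi}(G_j,x)$, and the common factor $i^{n}$ cancels inside the absolute value, so that
\[
\left|\frac{\phi(G_1,ix)}{\phi(G_2,ix)}\right|
= \left|\frac{\widetilde{\phi}(G_1,x)}{\widetilde{\phi}(G_2,x)}\right|
\qquad (x\in\mathbb{R}).
\]

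Next I would remove the absolute value signs on the right-hand side. By (\ref{equ1.2}), $\widetilde{\phi}(G,x)=\sum_{i=0}^{\lfloor n/2\rfloor}b_{2i}(G)x^{n-2i}$ has all coefficients $b_{2i}(G)\ge 0$ and leading coefficient $b_{0}(G)=1$; hence $\widetilde{\phi}(G,x)>0$ for every $x>0$. In particular the ratio $\widetilde{\phi}(G_1,x)/\widetilde{\phi}(G_2,x)$ is strictly positive on $(0,+\infty)$, and the logarithm is defined there without absolute values.

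It then remains to show that the integrand is an even function of $x$, so that the integral over $\mathbb{R}$ becomes twice the integral over $(0,+\infty)$. Since every exponent $n-2i$ appearing in $\widetilde{\phi}(G,x)$ has the same parity as $n$, the substitution $x\mapsto -x$ multiplies $\widetilde{\phi}(G,x)$ by $(-1)^{n}$; this sign is the same for $G_1$ and $G_2$ (both have order $n$), so it cancels in the ratio and $\ln|\widetilde{\phi}(G_1,x)/\widetilde{\phi}(G_2,x)|$ is even in $x$. Combining these observations with (\ref{equ1.3}) yields (\ref{equ1.5}).

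The argument is essentially a direct substitution followed by a parity check, so there is no genuinely hard step; the only point requiring care is verifying that $\widetilde{\phi}(G_j,x)$ is strictly positive on $(0,+\infty)$ (so that dropping the absolute values is legitimate) and that the parity of $n$ is handled uniformly for both $G_1$ and $G_2$. Convergence of the integral is inherited from (\ref{equ1.3}) and does not need to be re-established.
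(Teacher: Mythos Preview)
Your proposal is correct and follows essentially the same approach as the paper: start from the Gutman--Polansky formula (\ref{equ1.3}), use the bipartite identity (\ref{equ1.4}) to replace $\phi(G_j,ix)$ by $i^{n}\widetilde{\phi}(G_j,x)$, observe that the ratio $\widetilde{\phi}(G_1,x)/\widetilde{\phi}(G_2,x)$ is an even function that is strictly positive for $x>0$, and hence reduce the integral over $\mathbb{R}$ to twice the integral over $(0,+\infty)$ without absolute values. Your write-up is in fact a bit more explicit than the paper's in justifying the positivity and the parity cancellation, but the underlying argument is the same.
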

\begin{proof}
Since $G_1, G_2$ are both bipartite graphs of order $n$,  it is easy to see that

$\displaystyle\frac{\widetilde{\phi}(G_1, x)}{\widetilde{\phi}(G_2, x)}=
\frac{\sum\limits_{j=0}^{\lfloor \frac{n}{2} \rfloor} b_{2j}(G_{1})x^{n-2j}}{\sum\limits_{j=0}^{\lfloor \frac{n}{2} \rfloor} b_{2j}(G_{2})x^{n-2j}}
$ is an even function and $\displaystyle\frac{\widetilde{\phi}(G_1, x)}{\widetilde{\phi}(G_2, x)}>0$ for $x >0$.\\ So from (\ref{equ1.3}) and (\ref{equ1.4}) we have
$$\begin{aligned}
\mathbb{E}(G_1)-\mathbb{E}(G_2)=&\frac{1}{\pi}\int\limits_{-\infty}^{+\infty}\ln \left|\frac{\phi(G_1, ix)}{\phi(G_2, ix)}\right|\text{d} x
=\frac{1}{\pi}\int\limits_{-\infty}^{+\infty}\ln \left|\frac{\widetilde{\phi}(G_1, x)}{\widetilde{\phi}(G_2, x)}\right|\text{d} x
=\frac{2}{\pi}\int\limits_{0}^{+\infty}\ln \frac{\widetilde{\phi}(G_1, x)}{\widetilde{\phi}(G_2, x)}\text{d} x.
\end{aligned}$$\end{proof}

%
\begin{defn}
Let $f(x) = \sum\limits_{i=0}^n a_i x^{n-i}$ and  $g(x) = \sum\limits_{i=0}^n b_i x^{n-i}$ be two monic polynomials of degree $n$ with nonnegative coefficients.
 \begin{enumerate}[(1).]
 \item If $a_{i} \leq b_{i}$ for all $0 \leq i \leq n$,  then we write $f(x) \preccurlyeq g(x)$.
 \item If $f(x) \preccurlyeq g(x)$ and $f(x) \neq g(x)$,  then we write $f(x) \prec g(x)$.
 \end{enumerate}
\end{defn}

Now we define the following quasi-order for bipartite graphs (which is equivalent to the well known  quasi-order defined by the coefficients $b_{i}(G)$ ).

\begin{defn}
Let $G_{1}$ and $G_{2}$ be two bipartite graphs of order $n$. Then we write $G_{1} \preccurlyeq G_{2}$ if $\widetilde{\phi}(G_{1}, x)\preccurlyeq \widetilde{\phi}(G_{2}, x)$,  write $G_{1} \prec G_{2}$ if $\widetilde{\phi}(G_{1}, x)\prec \widetilde{\phi}(G_{2}, x)$ and write $G_{1} \sim G_{2}$ if $\widetilde{\phi}(G_{1}, x) = \widetilde{\phi}(G_{2}, x).$
\end{defn}

According to the integral formula in Theorem \ref{thm1.1},  we can see that  for two bipartite graphs $G_{1}$ and $G_{2}$ of order $n$,
$$G_1 \preccurlyeq G_2 \Longrightarrow \mathbb{E}(G_1) \le \mathbb{E}(G_2);\qquad\qquad and \qquad\qquad  G_1 \prec G_2\Longrightarrow \mathbb{E}(G_1)< \mathbb{E}(G_2).$$

The method of the quasi-order relation ``$\preccurlyeq$'' is an important tool in the study of graph energy.

Graphs with extremal energies are extensively studied in literature. Gutman \cite{gutman1977ase} determined the first and second maximal energy trees of order $n$;
N.Li,  S.Li \cite{li2008eet} determined the third maximal energy tree;
Gutman et al. \cite{gutman2008eet} conjectured that the fourth maximal energy tree is ~$P_{n}(2, 6, n-9)$ (see Fig.\ref{figstarlike} for this graph); B. Huo et al. \cite{huo2011complete} proved that this conjecture is true.

 In this paper, we first consider in \S 2 some recurrence relation of the polynomials $\widetilde{\phi}(G(k), x)$ for the $k$-subdivision graph $G(k)$ (on some cut edge $e$ of a bipartite graph $G$). Then in \S 3 we present a new method of directly comparing the energies  of two $k$-subdivision bipartite graphs $G(k)$ and $H(k)$ if they are quasi-order incomparable.
   Using this new method,  we are able to  provide a simplified proof of the above mentioned conjecture on the fourth maximal energy tree. The main result of this paper is that, we determine (in \S 5) the first $\lfloor\frac{n-7}{2}\rfloor$  largest energy trees of order $n\ge 31$ by using the new method of comparing energies given in \S 3. For example when $n\ge 2007$, we can determine the first 1000 largest energy trees of order n (but up to now, only the first four are known).

\section{Some recurrence relations of $\phi(G, x)$  and $\widetilde{\phi}(G, x)$  for $k$-subdivision bipartite graphs}
The following lemma is an alternative form of Heilbronner's recurrence formula \cite{heilrecursion}.
\begin{lem}\cite{heilrecursion}\label{lem2.1}   Let  $uv$ be a cut edge of a graph $G$,  then $\phi(G, x)=\phi(G-uv, x)-\phi(G-u-v, x).$
\end{lem}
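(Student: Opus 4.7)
The plan is to establish Lemma~\ref{lem2.1} by appealing to Sachs' coefficient theorem for the characteristic polynomial and then classifying elementary subgraphs according to how they interact with the designated edge. Write $\phi(G,x)=\sum_{i=0}^{n}c_{i}(G)x^{n-i}$ and recall Sachs' formula $c_{i}(G)=\sum_{S}(-1)^{p(S)}2^{c(S)}$, where the sum is over all \emph{elementary} (Sachs) subgraphs $S$ of $G$ on $i$ vertices, $p(S)$ is the number of connected components of $S$, and $c(S)$ is the number of components of $S$ that are cycles. It then suffices to prove the coefficient-level identity $c_{i}(G)=c_{i}(G-uv)-c_{i-2}(G-u-v)$ for every $i$; multiplying by $x^{n-i}$ and summing gives the lemma, since $\phi(G-u-v,x)$ has degree $n-2$ and hence $\sum_{i}c_{i-2}(G-u-v)x^{n-i}=\phi(G-u-v,x)$.

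To prove the coefficient identity, I would partition the elementary subgraphs of $G$ on $i$ vertices into three classes according to the role of $uv$ in $S$: (a) $uv\notin E(S)$; (b) $uv$ is a $K_{2}$ component of $S$; (c) $uv$ lies inside a cycle component of $S$. The cut-edge hypothesis eliminates class (c) at once, because a bridge lies in no cycle of $G$. Class (a) is in an obvious bijection with the elementary subgraphs of $G-uv$ on $i$ vertices and therefore contributes $c_{i}(G-uv)$. For class (b), deleting the two vertices $u,v$ gives a bijection with the elementary subgraphs of $G-u-v$ on $i-2$ vertices; under this bijection $p$ drops by exactly $1$ and $c$ is unchanged, so the contribution to $c_i(G)$ from class (b) equals $-c_{i-2}(G-u-v)$. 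Summing the three contributions yields the claim.

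The only real subtlety is the sign bookkeeping in class (b): the extra $K_{2}$ component contributed by $uv$ is exactly what produces the factor of $-1$ relative to the corresponding subgraph of $G-u-v$, and it is this single sign flip that furnishes the minus sign in Heilbronner's formula. Beyond that, the argument is purely combinatorial, and the cut-edge hypothesis is used in exactly one place, namely to force class (c) to be empty; if $uv$ were merely an arbitrary edge, the same analysis would instead produce Schwenk's edge-deletion formula with an additional correction $-2\sum_{Z}\phi(G-V(Z),x)$ summed over cycles $Z$ through $uv$, which specializes to Heilbronner's formula precisely when $uv$ is a bridge.
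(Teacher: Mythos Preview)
Your argument via Sachs' coefficient theorem is correct: the partition of elementary subgraphs into classes (a), (b), (c) is exhaustive, the cut-edge hypothesis kills class (c), and the sign bookkeeping in class (b) is handled properly (removing the $K_{2}$ component $\{u,v\}$ decreases $p(S)$ by exactly one and leaves $c(S)$ unchanged, yielding the factor $-1$). The passage from the coefficient identity to the polynomial identity is also fine once one notes that $G-u-v$ has $n-2$ vertices, so the shift $i\mapsto i-2$ aligns the degrees correctly.

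As for comparison with the paper: the paper does \emph{not} prove Lemma~\ref{lem2.1} at all. It is stated as a known result attributed to Heilbronner \cite{heilrecursion} and used as a black box in the proof of Lemma~\ref{lem2.2}. So your proposal supplies a self-contained justification where the paper is content to cite the literature. Your Sachs-theorem route is one of the standard proofs of this formula; an alternative, closer in spirit to Heilbronner's original derivation, proceeds by expanding $\det(xI-A)$ along the row corresponding to $u$ (or by block-decomposing $A$ across the cut), but the combinatorial argument you give is arguably cleaner and has the advantage of making the general edge-deletion formula (with the cycle correction term) visible as a byproduct, exactly as you note in your final remark.
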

\begin{figure}[h]
\begin{center}
\begin{tabular}{ccc}
\begin{tikzpicture}
\tikzstyle myline=[line width=0.8pt]
\coordinate (A) at (-0.7, 0);
\coordinate (B) at (0.7, 0) ;
\draw (A)--(B)  node [midway,  above=1pt] {$e$};
\node at (-1.4, 0) {$G_{1}$};
\node at (1.64, 0) {$G_{2}$};
\draw (-1.2, 0) circle (20pt);
\draw (1.2, 0) circle (20pt);
\foreach \point in {A, B}
{\fill [black] (\point) circle (1.5pt);}
\node[below] at (A) {$u$};
\node[below] at (B) {$v$};
\node at (0, -1) {$H_{1}$};
\end{tikzpicture}\hspace{1cm} &
\begin{tikzpicture}
\tikzstyle myline=[line width=0.8pt]
\coordinate (A) at (-0.9, 0);
\coordinate (B) at (0.9, 0) ;
\coordinate (C) at (0, 0) ;
\draw (A)--(C)  node [midway,  above=1pt] {$e_{1}$};
\draw (C)--(B)  node [midway,  above=1pt] {$e_{2}$};
\node at (-1.7, 0) {$G_{1}$};
\node at (1.7, 0) {$G_{2}$};
\draw (-1.4, 0) circle (20pt);
\draw (1.4, 0) circle (20pt);
\foreach \point in {A, B, C}
{\fill [black] (\point) circle (1.5pt);}
\node[left] at (A) {$u$};
\node[right] at (B) {$v$};
\node[below] at (C) {$w$};
\node at (0, -1) {$H_{2}$};
\end{tikzpicture}\hspace{1cm} &

\begin{tikzpicture}
\tikzstyle myline=[line width=0.8pt]
\coordinate (A) at (-1.35, 0);
\coordinate (B) at (-0.45, 0) ;
\coordinate (C) at (0.45, 0) ;
\coordinate (D) at (1.35, 0) ;
\draw (A)--(B) ;
\draw (B)--(C)  node [midway,  above=1pt] {$e'$};
\draw (C)--(D);
\node at (-2.15, 0) {$G_{1}$};
\node at (2.15, 0) {$G_{2}$};
\draw (-1.85, 0) circle (20pt);
\draw (1.85, 0) circle (20pt);
\foreach \point in {A, B, C,D}
{\fill [black] (\point) circle (1.5pt);}
\node[left] at (A) {$u$};
\node[right] at (D) {$v$};
\node[below] at (B) {$w$};
\node[below] at (C) {$y$};
\node at (0, -1) {$H_{3}$};
\end{tikzpicture}
\end{tabular}\\
\end{center}
\vspace*{-0.5cm}\caption{The graphs $H_{1}, H_{2}$ and $H_{3}$ }\label{fig2}
\end{figure}
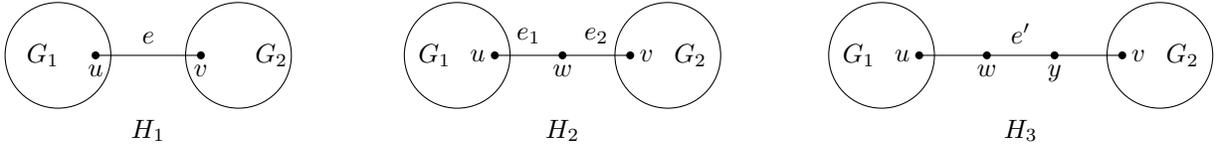

For the sake of simplicity, we sometime abbreviate $\phi(G,x)$ by $\phi(G)$.

The following relation can be derived from Lemma \ref{lem2.1}.
\begin{lem}\label{lem2.2}
Let $H_{1}, H_{2}, H_{3}$ be graphs as shown in  Fig.\ref{fig2}.  Then we have
$$\phi(H_{3}, x)=x \phi(H_{2}, x)-\phi(H_{1}, x)$$
\end{lem}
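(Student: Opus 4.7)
The plan is to reduce everything to Lemma \ref{lem2.1} (Heilbronner) applied to suitable cut edges, express $\phi(H_1), \phi(H_2), \phi(H_3)$ in a common basis consisting of the four polynomials $\phi(G_1), \phi(G_2), \phi(G_1-u), \phi(G_2-v)$, and then verify the identity by a direct algebraic match.

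First I would handle $H_1$ cleanly: the edge $e=uv$ is a cut edge, so Lemma \ref{lem2.1} gives immediately
$$\phi(H_1)=\phi(G_1)\phi(G_2)-\phi(G_1-u)\phi(G_2-v).$$
Next, let $G_1^+$ denote the graph obtained from $G_1$ by attaching a pendant vertex $w$ at $u$, and $G_2^+$ the analog for $G_2$ at $v$. Applying Lemma \ref{lem2.1} to the pendant edge $uw$ in $G_1^+$ yields $\phi(G_1^+)=x\phi(G_1)-\phi(G_1-u)$, and symmetrically $\phi(G_2^+)=x\phi(G_2)-\phi(G_2-v)$; these simple identities are the key auxiliary formulas.

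Now for $H_2$, I would apply Lemma \ref{lem2.1} to the cut edge $e_2=wv$. The deletion $H_2-wv$ splits as $G_1^+$ (disjoint union with the isolated-from-$G_2$ piece $G_2$), while $H_2-w-v$ splits as $G_1\sqcup(G_2-v)$. Thus
$$\phi(H_2)=\phi(G_1^+)\phi(G_2)-\phi(G_1)\phi(G_2-v)=x\phi(G_1)\phi(G_2)-\phi(G_1-u)\phi(G_2)-\phi(G_1)\phi(G_2-v).$$
For $H_3$ I would apply Lemma \ref{lem2.1} to the middle cut edge $e'=wy$: the deletion $H_3-wy$ splits as $G_1^+\sqcup G_2^+$, while $H_3-w-y$ splits as $G_1\sqcup G_2$, so
$$\phi(H_3)=\phi(G_1^+)\phi(G_2^+)-\phi(G_1)\phi(G_2)=\bigl[x\phi(G_1)-\phi(G_1-u)\bigr]\bigl[x\phi(G_2)-\phi(G_2-v)\bigr]-\phi(G_1)\phi(G_2).$$

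Finally I would just expand $x\phi(H_2)-\phi(H_1)$ and $\phi(H_3)$ in the basis $\{\phi(G_1)\phi(G_2),\,\phi(G_1-u)\phi(G_2),\,\phi(G_1)\phi(G_2-v),\,\phi(G_1-u)\phi(G_2-v)\}$ and check that the coefficients agree (both sides equal $x^2\phi(G_1)\phi(G_2)-x\phi(G_1-u)\phi(G_2)-x\phi(G_1)\phi(G_2-v)-\phi(G_1)\phi(G_2)+\phi(G_1-u)\phi(G_2-v)$). There is no conceptual obstacle here; the ``hard'' part is purely bookkeeping, namely identifying the correct decomposition of each $H_i-(\text{edge})$ and $H_i-(\text{endpoints})$ as a disjoint union involving $G_1,G_2$ and their pendant-extensions, after which the identity falls out by polynomial arithmetic.
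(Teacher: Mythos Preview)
Your proposal is correct and follows essentially the same approach as the paper: both proofs introduce the pendant-extended graphs $G_1^+,G_2^+$ (the paper calls them $G_1',G_2'$), apply Lemma~\ref{lem2.1} to express $\phi(H_1),\phi(H_2),\phi(H_3)$ in the four-polynomial basis $\phi(G_1),\phi(G_2),\phi(G_1-u),\phi(G_2-v)$, and then verify the identity by expansion. The only cosmetic difference is that for $H_2$ you cut $e_2=wv$ directly, while the paper cuts $e_1=uw$ first and then $e_2$ in the remainder; both yield the same expression for $\phi(H_2)$.
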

\begin{proof} Let $G_1'$ be the graph obtained from $G_1$ by attaching a new pendent edge $uw$ to $G_1$ at $u$, and $G_2'$ be the graph obtained from $G_2$ by attaching a new pendent edge $vy$ to $G_2$ at $v$. Then by using  Lemma \ref{lem2.1} we have
$$\phi(G'_{1})= x \phi(G_{1})- \phi (G_{1}-u), \quad \mbox {and} \quad \phi(G'_{2}) = x \phi(G_{2})- \phi (G_{2}-v).$$
Now using Lemma \ref{lem2.1} for $H_3$ and its cut edge $e'=wy$,  we have
$$\begin{aligned} &\phi(H_{3})=\phi(H_{3}-e')- \phi(H_{3}-w-y) = \phi(G'_{1}) \phi(G'_{2})- \phi(G_{1}) \phi(G_{2}) \\
=&(x \phi(G_{1})- \phi (G_{1}-u))( x \phi(G_{2})- \phi (G_{2}-v))- \phi(G_{1}) \phi(G_{2})\\
=& (x^{2}-1)\phi(G_{1}) \phi(G_{2})-x\phi(G_{1})\phi (G_{2}-v)-x\phi(G_{2})\phi (G_{1}-u)+ \phi(G_{1}-u) \phi(G_{2}-v)\end{aligned}$$

\noindent Also using Lemma \ref{lem2.1}  for $H_{2}$ and $H_{2}-e_{1}$ we have£º
$$\begin{aligned}
\phi(H_{2})=&\phi(H_{2}-e_{1})- \phi(H_{2}-u-w)
=\phi(H_{2}-e_{1}-e_{2})-\phi(H_{2}-e_{1}-w-v)- \phi((G_{1}-u )\cup G_{2})\\
=& x \phi(G_{1}) \phi (G_{2}) - \phi(G_{1}) \phi(G_{2}-v) -\phi(G_{1}-u) \phi(G_{2})
\end{aligned}$$

\noindent Using Lemma \ref{lem2.1} for $H_{1}$ we also have
$$\phi(H_{1})=\phi(H_{1}-e)- \phi(H_{1}-u-v)= \phi(G_{1}) \phi (G_{2}) - \phi(G_{1}-u) \phi(G_{2}-v)$$

\noindent Now it is easy to verify from the above three equations that $\phi(H_{3})=x \phi(H_{2})-\phi(H_{1})$.

\end{proof}

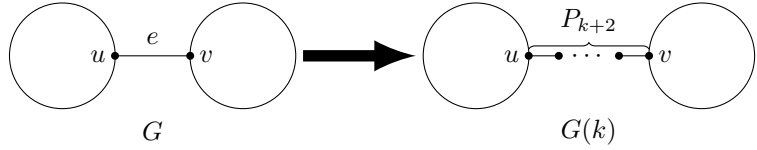
\begin{figure}[h]
\begin{center}
\begin{tikzpicture}
\begin{scope}[xshift=-1.5cm]
\tikzstyle myline=[line width=0.8pt]
\coordinate (A) at (-0.5, 0);
\coordinate (B) at (0.5, 0) ;
\draw (A)--(B)  node [midway,  above=1pt] {$e$};
\draw (-1.2, 0) circle (20pt);
\draw (1.2, 0) circle (20pt);
\foreach \point in {A, B}
{\fill [black] (\point) circle (1.5pt);}
\node[left] at (A) {$u$};
\node[right] at (B) {$v$};
\node at (0, -1) {$G$};
\end{scope}
\begin{scope}[xshift=0.5cm]
\pgfsetarrowsend{latex}
\pgfsetlinewidth{1ex}
\pgfpathmoveto{\pgfpoint{0cm}{0cm}}
\pgfpathlineto{\pgfpoint{1.5cm}{0cm}}
\pgfusepath{stroke}
\useasboundingbox (-0.25, -0.25) rectangle (3.75, 2.25);
\end{scope}
\begin{scope}[xshift=4.3cm]
\tikzstyle myline=[line width=0.8pt]
\coordinate (A) at (-0.8, 0);
\coordinate (B) at (-0.4, 0) ;
\coordinate (C) at (0.4, 0) ;
\coordinate (D) at (0.8, 0) ;
\draw (A)--(B) ;
\draw (C)--(D) ;
\node at (0, 0) {$\cdots$};
\draw (-1.5, 0) circle (20pt);
\draw (1.5, 0) circle (20pt);
\foreach \point in {A, B, C, D}
{\fill [black] (\point) circle (1.5pt);}
\node[left] at (A) {$u$};
\node[right] at (D) {$v$};
\node at (0, -1) {$G(k)$};
\draw [decoration={brace, raise=2.5pt},  decorate] (A) -- (D)node [midway,  above=5pt]{$P_{k+2}$};
\end{scope}
\end{tikzpicture}
\end{center}
    \vspace*{-0.8cm}\caption{Graph $G$ and its $k$-subdivision graph}
\end{figure}

\begin{defn}
Let $e$ be a cut edge of a graph $G$,  and $G_{e}(k)$ denote the graph obtained by replacing $e$ with a path of length $k+1$ (for simplicity of notations, we usually we abbreviate $G_{e}(k)$ by $G(k)$ ). We say that $G(k)$ is a  $k$-subdivision graph of $G$ on the cut edge $e$. We also agree that $G(0)=G$.
\end{defn}

From Lemma \ref{lem2.2},  we have the following recurrence relation for $\phi(G(k), x)$.
\begin{thm}\label{thm2.1}
Let $G(k)$ be a $k$-subdivision graph of $G$ on the cut edge $e$ of $G$,  then we have $$\phi(G(k+2), x)=x \phi(G(k+1), x)-\phi(G(k), x) \qquad\qquad (k \geq 0)$$
\end{thm}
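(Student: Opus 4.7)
The plan is to reduce Theorem \ref{thm2.1} directly to Lemma \ref{lem2.2} by picking an appropriate cut edge of $G(k)$ and identifying $G(k)$, $G(k+1)$, and $G(k+2)$ with the three graphs $H_1$, $H_2$, and $H_3$ appearing in that lemma.

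First I would fix $k \geq 0$ and describe $G(k)$ explicitly: let $u = x_0, x_1, \ldots, x_k, x_{k+1} = v$ be the vertices of the length-$(k+1)$ path that replaces the original cut edge $e = uv$, where $G_1, G_2$ denote the two components of $G - e$ (containing $u$ and $v$ respectively). Every edge of this inserted path is itself a cut edge of $G(k)$. I would single out the last one, $e^{\ast} := x_k x_{k+1}$, and let $\widetilde{G}_1$ be the component of $G(k) - e^{\ast}$ containing $x_k$ (this is just $G_1$ with a pendant path of length $k$ attached at $u$), and $\widetilde{G}_2 = G_2$ the component containing $x_{k+1} = v$.

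The key step is the observation that, with $\widetilde{G}_1$ and $\widetilde{G}_2$ playing the roles of the two ``blocks'' in Fig.\ref{fig2}, the graph $G(k)$ becomes exactly the graph $H_1$ (with cut edge $e^{\ast}$), the graph $G(k+1)$ becomes exactly $H_2$ (obtained from $G(k)$ by inserting one new internal vertex on $e^{\ast}$), and $G(k+2)$ becomes exactly $H_3$ (obtained by inserting two new internal vertices on $e^{\ast}$). Applying Lemma \ref{lem2.2} with these identifications immediately yields
$$\phi(G(k+2), x) = x\,\phi(G(k+1), x) - \phi(G(k), x),$$
which is the desired recurrence.

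There is no real obstacle here beyond setting up the identification carefully; all of the computational work has already been done inside Lemma \ref{lem2.2}. The only point I would double check is that the conclusion does not depend on which edge of the inserted path is chosen as $e^{\ast}$, but this is automatic because the statement only involves the polynomials $\phi(G(j), x)$, which are intrinsic to the graphs, and the lemma applies to every cut edge of the required form.
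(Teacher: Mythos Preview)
Your proposal is correct and follows essentially the same approach as the paper's own proof: both pick an edge on the subdivided path in $G(k)$, identify $G(k)$, $G(k+1)$, $G(k+2)$ with the graphs $H_1$, $H_2$, $H_3$ of Lemma~\ref{lem2.2}, and apply that lemma directly. Your write-up is simply more explicit about labelling the path vertices and naming the chosen edge $e^\ast$, but the underlying argument is identical.
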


\begin{proof}
Take $H_1=G(k)$ in Lemma \ref{lem2.2} and $e$ be an edge in $H_{1}$ on the path of length $k+1$ obtained by $k$-subdividing the edge $e$. Then $H_2=G(k+1)$ and $H_3=G(k+2)$. The result now follows from Lemma \ref{lem2.2}.
\end{proof}

\begin{thm}\label{thm2.2}
Let $G$ be a bipartite graph of order $n$ and $G(k)$ be a $k$-subdivision graph (of order $n+k$) of $G$ on some cut edge $e$. Then we have
 \begin{equation}\label{equ2.1}
 \widetilde{\phi}(G(k+2), x)=x \widetilde{\phi}(G(k+1), x)+ \widetilde{\phi}(G(k), x) \qquad\qquad (k \geq 0)
 \end{equation}
\end{thm}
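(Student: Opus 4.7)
The plan is to derive (\ref{equ2.1}) directly from the already-established identity in Theorem \ref{thm2.1} by the substitution $x \mapsto ix$, exploiting that all the graphs $G(k)$ are bipartite (so that formula (\ref{equ1.4}) is available).

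First I would record the easy observation that subdividing an edge of a bipartite graph produces a bipartite graph (one can extend the $2$-coloring along the new path, alternating colors). Hence, since $G$ is bipartite, every $G(k)$ is a bipartite graph, and its order is $n+k$. This justifies applying formula (\ref{equ1.4}) to each of $G(k)$, $G(k+1)$, and $G(k+2)$, giving
\[
\phi(G(k),ix) = i^{n+k}\widetilde{\phi}(G(k),x),\qquad
\phi(G(k+1),ix) = i^{n+k+1}\widetilde{\phi}(G(k+1),x),
\]
\[
\phi(G(k+2),ix) = i^{n+k+2}\widetilde{\phi}(G(k+2),x).
\]

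Next I would take the identity from Theorem \ref{thm2.1}, namely
\[
\phi(G(k+2),x) = x\,\phi(G(k+1),x) - \phi(G(k),x),
\]
and substitute $ix$ for $x$. Replacing each $\phi$ on both sides by the expression above and then dividing through by $i^{n+k+2}$ gives
\[
\widetilde{\phi}(G(k+2),x) = x\,\widetilde{\phi}(G(k+1),x) - i^{-2}\,\widetilde{\phi}(G(k),x).
\]
Since $i^{-2}=-1$, the minus sign flips and we arrive precisely at (\ref{equ2.1}).

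There is essentially no obstacle here; the only subtlety worth being careful about is the bookkeeping of the powers of $i$ (checking that the orders $n+k, n+k+1, n+k+2$ combine correctly with the extra factor of $i$ coming from $x\mapsto ix$), and the initial remark that bipartiteness is preserved under subdivision so that (\ref{equ1.4}) legitimately applies at every step. The sign flip from $-$ to $+$ when passing from $\phi$ to $\widetilde{\phi}$ is the conceptual content of the theorem, and it falls out automatically from $i^2=-1$.
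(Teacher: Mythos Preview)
Your proof is correct and follows essentially the same approach as the paper: start from Theorem \ref{thm2.1}, substitute $x\mapsto ix$, apply (\ref{equ1.4}) to each of the bipartite graphs $G(k)$, $G(k+1)$, $G(k+2)$, and divide by $i^{n+k+2}$. Your added remark that bipartiteness is preserved under subdivision is a detail the paper leaves implicit.
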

\begin{proof}
By Theorem \ref{thm2.1},  we have
$$\phi(G(k+2), x)= x \phi(G(k+1), x)-\phi(G(k), x)$$
substitute $x$ by $i x$,  we get
$$\phi(G(k+2), ix)=i x \phi(G(k+1), ix)-\phi(G(k), ix).$$
Now using (\ref{equ1.4}) for $G(k+2), G(k+1)$ and $G(k)$ (since they are all bipartite) we
have $$
i^{n+k+2}\widetilde{\phi}(G(k+2), x)= i^{n+k+1}\dot i x\widetilde{\phi}(G(k+1), x)
-i^{n+k}\widetilde{\phi}(G(k), x)
$$
Dividing both sides by $i^{n+k+2}$ we get (\ref{equ2.1}).
\end{proof}

\begin{thm}\label{thm2.3}
Let $e,  e'$ be cut edges of bipartite graphs $G$ and $H$ of order $n$ ,  respectively.  If $G(0)\preccurlyeq H(0)$ and  $G(1)\preccurlyeq H(1)$,  then
 we have $G(k)\preccurlyeq H(k)$ for all $k \ge 2$,
with $G(k)\sim H(k)$ if and only if both the
  two relations  $H(0)\sim G(0)$ and $H(1) \sim G(1)$ hold.
\end{thm}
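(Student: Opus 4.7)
The plan is to introduce the difference polynomial
\[
 f_k(x) \;=\; \widetilde{\phi}(H(k),x) \;-\; \widetilde{\phi}(G(k),x),
\]
and observe that the statement $G(k)\preccurlyeq H(k)$ is exactly the statement that $f_k(x)$ has all nonnegative coefficients, while $G(k)\sim H(k)$ means $f_k(x)\equiv 0$. The crucial point is that by applying Theorem \ref{thm2.2} separately to $G$ and to $H$ and subtracting, the same linear recurrence is inherited by the difference:
\[
 f_{k+2}(x) \;=\; x\,f_{k+1}(x) \;+\; f_k(x) \qquad (k\ge 0).
\]

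Next I would establish the inequality $G(k)\preccurlyeq H(k)$ by induction on $k$. The base cases $k=0$ and $k=1$ are the hypotheses, which say that $f_0$ and $f_1$ have nonnegative coefficients. For the inductive step, if $f_k$ and $f_{k+1}$ have nonnegative coefficients, then $xf_{k+1}(x)+f_k(x)$ clearly does as well, and this equals $f_{k+2}(x)$. Hence $f_k\succcurlyeq 0$ coefficientwise for every $k\ge 0$, which gives $G(k)\preccurlyeq H(k)$ for all $k\ge 2$.

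For the equivalence concerning the ``$\sim$'' relation, the forward direction (that $f_0\equiv f_1\equiv 0$ forces $f_k\equiv 0$ for all $k$) is immediate from the recurrence by induction. The interesting direction is the converse: assume $f_k\equiv 0$ for some fixed $k\ge 2$. I would exploit the fact already established that every $f_j$ has nonnegative coefficients. Writing the recurrence as $f_k(x)=xf_{k-1}(x)+f_{k-2}(x)$ and using that both summands are coefficientwise nonnegative, the vanishing of the left-hand side forces $xf_{k-1}(x)=0$ and $f_{k-2}(x)=0$, hence $f_{k-1}\equiv 0$ and $f_{k-2}\equiv 0$.

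The final step is a descent argument: once we know $f_{k-1}\equiv f_{k-2}\equiv 0$, if $k-1\ge 2$ we can apply the same argument to $f_{k-1}=0$ to push the vanishing down one more level, and iterate until we reach $f_1\equiv 0$ and $f_0\equiv 0$, which is the desired conclusion $H(1)\sim G(1)$ and $H(0)\sim G(0)$. There is no real obstacle here; the slight subtlety is only to package the descent cleanly (e.g.\ by strong induction on $k$, or by noting that at each step both $f_j$ and $f_{j-1}$ vanish so we can always perform the next descent step as long as the lower index is still $\ge 0$). This completes the proof sketch.
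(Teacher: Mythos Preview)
Your argument is correct and is exactly the approach the paper intends: the paper's own proof is the single sentence ``The result follows directly from Theorem~\ref{thm2.2} and induction on $k$,'' and your difference polynomial $f_k$ together with the recurrence $f_{k+2}=xf_{k+1}+f_k$ is precisely the way to make that induction explicit. Your treatment of the ``if and only if'' clause via the descent argument is a clean way to unpack what the paper leaves implicit.
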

\begin{proof}
The result follows directly from Theorem \ref{thm2.2} and induction on $k$.
\end{proof}

\begin{thm}\label{thm2.4}
Let $G,  H$ be  bipartite  graphs  of order $n$,  $e_{1},  e_{2}$ be two cut edges of $G$ and   ${e}_{1}',  {e}_{2}'$ be two cut edges of $H$. Let $G(a, b)$ denote the graph obtained from $G$ by subdividing  $e_{1},  e_{2}$ by $a, b$ times, and $H(c, d)$ denote the graph obtained from $H$ by subdividing  ${e}_{1}',  {e}_{2}'$ by $c, d$ times, respectively.
If
\begin{align}
G(0, 0)\preccurlyeq H(0, 0)  \qquad\qquad and \qquad\qquad  &G(0, 1)\preccurlyeq H(0, 1), \label{equ2.2}\\
G(1, 0)\preccurlyeq H(1, 0)  \qquad\qquad and \qquad\qquad  &G(1, 1)\preccurlyeq H(1, 1)\label{equ2.3}
\end{align}
then we have $G(l, k)\preccurlyeq H(l, k)$ for all $l\geq 0$ and $k\geq 0$. Moreover, if one of $l$ and $k$ is at least 2, then $G(l, k)\prec  H(l, k) $ if each of (\ref{equ2.2}) and (\ref{equ2.3}) contains at least one strict relation.
\end{thm}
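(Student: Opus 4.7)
The plan is to apply Theorem \ref{thm2.3} iteratively in the two subdivision directions. First I would fix $l=0$ and observe that the sequences $\{\widetilde{\phi}(G(0,k))\}_{k\geq 0}$ and $\{\widetilde{\phi}(H(0,k))\}_{k\geq 0}$ both satisfy the recurrence of Theorem \ref{thm2.2}, with $e_2$ (respectively $e_2'$) playing the role of the repeatedly subdivided cut edge. Hypothesis (\ref{equ2.2}) supplies the two base comparisons $G(0,0)\preccurlyeq H(0,0)$ and $G(0,1)\preccurlyeq H(0,1)$, so Theorem \ref{thm2.3} yields $G(0,k)\preccurlyeq H(0,k)$ for every $k\geq 0$. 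Fixing $l=1$ and invoking (\ref{equ2.3}) in exactly the same manner produces $G(1,k)\preccurlyeq H(1,k)$ for every $k\geq 0$.

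Next, for each fixed $k\geq 0$, the sequences $\{\widetilde{\phi}(G(l,k))\}_{l\geq 0}$ and $\{\widetilde{\phi}(H(l,k))\}_{l\geq 0}$ obey the analogous recurrence in the $l$-variable, this time from Theorem \ref{thm2.2} applied to $e_1$ and $e_1'$. The two base cases at $l=0,1$ have just been established, so a second application of Theorem \ref{thm2.3} delivers $G(l,k)\preccurlyeq H(l,k)$ for all $l\geq 0$, yielding the first assertion.

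For the strict part I would use the ``if and only if'' clause of Theorem \ref{thm2.3}. Suppose (\ref{equ2.2}) is strict at some position $(0,b_0)$ with $b_0\in\{0,1\}$ and (\ref{equ2.3}) is strict at some $(1,b_1)$ with $b_1\in\{0,1\}$. Because the recurrence $\widetilde{\phi}(X(j+2))=x\widetilde{\phi}(X(j+1))+\widetilde{\phi}(X(j))$ has nonnegative coefficients, coefficientwise strictness in the base propagates to every subsequent level: by the first application, the strict entry in (\ref{equ2.2}) forces $G(0,k)\prec H(0,k)$ for all $k\geq 2$, and the strict entry in (\ref{equ2.3}) forces $G(1,k)\prec H(1,k)$ for all $k\geq 2$. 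Substituting these into the second ($e_1$-direction) application and invoking the strict version of Theorem \ref{thm2.3} produces $G(l,k)\prec H(l,k)$ whenever $k\geq 2$ and $l\geq 0$. A symmetric argument with the two subdivision directions swapped handles the remaining case $l\geq 2$, $k\in\{0,1\}$.

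The main obstacle I anticipate is precisely the bookkeeping in this strict case: the strict relations can sit at any of the four base corners, and one must verify that in every such configuration the strictness can be threaded through at least one of the two recurrences so as to reach every cell $(l,k)$ with $\max(l,k)\geq 2$. The essential tool is the iff-clause of Theorem \ref{thm2.3} (equivalently, the nonnegativity of the recurrence coefficients), which lets us promote base strictness to propagated strictness in a controlled way, while its contrapositive makes transparent any cell in which equality is forced.
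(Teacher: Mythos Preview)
Your approach is essentially identical to the paper's: apply Theorem \ref{thm2.3} first in the $e_2$-direction to pass from the four base cells to the two rows $l=0,1$, then in the $e_1$-direction to fill out all $(l,k)$; the strict part is handled via the iff-clause of Theorem \ref{thm2.3} and a symmetry argument, exactly as the paper does (it says only ``Similar arguments apply to the case $l\geq 2$''). Your anticipated bookkeeping worry about where the strict base entries sit is well founded and is treated with the same brevity in the paper's own proof.
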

\begin{proof}
Using  Theorem \ref{thm2.3} for $e_{2}$ and $e_{2}'$ we have
\begin{align}
\mbox {(\ref{equ2.2})}  &\Longrightarrow G(0, k)\preccurlyeq H(0, k) \quad (k \ge 0) , \label{equ2.4}\\
\mbox {(\ref{equ2.3})}  &\Longrightarrow G(1, k)\preccurlyeq H(1, k) \quad (k \ge 0)\label{equ2.5}.
\end{align}

\noindent Now using  Theorem \ref{thm2.3} for $e_{1}$ and $e_{1}'$ we also have

\hspace {3cm} (2.4) and (2.5)   $\Longrightarrow  G(l, k)\preccurlyeq H(l, k)$ \quad ($l \ge 0$).

When   (\ref{equ2.2}) and (\ref{equ2.3}) both contain strict relations,  we have both strict relations in  (\ref{equ2.4}) and (\ref{equ2.5}) for $k\geq 2$. Thus   $G(l, k)\prec  H(l, k) $ for all $k\geq 2$  by Theorem \ref{thm2.3}. Similar arguments apply to the case $l\geq 2$.
\end{proof}

\section{A new method of directly comparing the energies of $k$-subdivision  bipartite graphs}
Notice that if the conditions in Theorem \ref{thm2.3} do not hold, then $G(k)$ and $H(k)$ might be  quasi-order incomparable.
 In this section,  we present a new method to directly compare the energies  of two $k$-subdivision bipartite graphs $G(k)$ and $H(k)$ when they are quasi-order incomparable.
   Using this method,  we  give  a simplified proof of the conjecture on the fourth maximal energy tree.

In the following, we always write $g_{k}=\widetilde{\phi}(G(k), x)$,  $h_{k}=\widetilde{\phi}(H(k), x)$, and $d_{k}=\displaystyle\frac{h_{k}}{g_{k}}$.
\begin{lem}\label{lem3.1}
Let $G(k)$, $H(k)$ be $k$-subdivision graphs on some cut edges  of  the bipartite graphs $G$ and $H$ of order $n$,  respectively ($k\ge 0$), $g_k, h_k$ and $d_k$ be defined as above.
Then for each fixed $x>0$,  we have
\begin{enumerate}[(1).]
\item If $d_{1} > d_{0}$,  then $d_{0} < d_{k} <d_{1}$ for all $k \geq 2;$
\item If $d_{1} < d_{0}$,  then $d_{1} < d_{k} <d_{0}$ for all $k \geq 2;$
\item If $d_{1} = d_{0}$,  then $ d_{k} =d_{0}$ for all $k$.
\end{enumerate}
\end{lem}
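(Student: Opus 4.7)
The plan is to use the Gutman-like recurrence of Theorem~\ref{thm2.2} to express $d_{k+2}$ as a convex combination of $d_{k+1}$ and $d_k$ with strictly positive weights, and then run an elementary induction.

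First I would note that since the coefficients of $\widetilde{\phi}$ are nonnegative and the leading term is $x^{n+k}$, we have $g_k>0$ and $h_k>0$ for every $x>0$. Theorem~\ref{thm2.2} then gives the simultaneous linear recurrences
\begin{equation*}
g_{k+2}=xg_{k+1}+g_k,\qquad h_{k+2}=xh_{k+1}+h_k\qquad (k\ge 0).
\end{equation*}
Dividing the second by the first yields
\begin{equation*}
d_{k+2}=\frac{xh_{k+1}+h_k}{xg_{k+1}+g_k}=\frac{(xg_{k+1})\,d_{k+1}+g_k\,d_k}{xg_{k+1}+g_k},
\end{equation*}
which exhibits $d_{k+2}$ as a weighted average of $d_{k+1}$ and $d_k$ with the strictly positive weights $xg_{k+1}$ and $g_k$. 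Consequently $d_{k+2}$ lies strictly between $d_{k+1}$ and $d_k$ whenever $d_{k+1}\ne d_k$, and equals both of them when $d_{k+1}=d_k$. This ``mediant property'' is the whole engine of the lemma.

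For case (1), assuming $d_1>d_0$, I would prove $d_0<d_k<d_1$ for all $k\ge 2$ by (strong) induction on $k$. The base case $k=2$ follows directly from the mediant property applied to $d_1$ and $d_0$. For the step, when $k\ge 3$ one applies the mediant property to $d_{k-1}$ and $d_{k-2}$, both of which lie in the closed interval $[d_0,d_1]$ by the inductive hypothesis (with $d_0$ and $d_1$ themselves being the endpoints when $k-2=0$ or $k-1=1$), and at least one of them lies in the open interval, which forces $d_k\in(d_0,d_1)$. Case (2), $d_1<d_0$, is handled symmetrically. For case (3), $d_1=d_0$ forces $d_2=d_0=d_1$ directly from the weighted-average formula, and then induction propagates the equality to all $k$.

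There is no real obstacle here: once the mediant identity $d_{k+2}=\frac{xg_{k+1}d_{k+1}+g_kd_k}{xg_{k+1}+g_k}$ is written down, each of the three cases is a one-line induction. The only point that requires a moment of care is to verify positivity of $g_k$ (hence of the weights) for $x>0$, so that the strict inequalities in cases (1) and (2) are genuinely preserved; this is immediate from the nonnegativity of the coefficients of $\widetilde{\phi}(G(k),x)$ together with the positive leading coefficient.
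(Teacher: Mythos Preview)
Your proposal is correct and follows essentially the same approach as the paper: the paper also uses Theorem~\ref{thm2.2} to write $d_k$ as a convex combination of $d_{k-1}$ and $d_{k-2}$ with strictly positive coefficients, then observes that $d_k$ therefore lies strictly between $d_{k-1}$ and $d_{k-2}$ (when they differ) and finishes by induction. Your write-up is slightly more explicit about the positivity of $g_k$ and the handling of the induction step, but the argument is the same.
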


\noindent (So in any case we have $d_{k} \geq \min\{d_{0},d_{1}\}$.)

\begin{proof}
By the recurrence relations in Theorem \ref{thm2.2}, we have
$$
\begin{aligned}
\displaystyle d_{k}=&\frac{h_{k}}{g_{k}}=\frac{x h_{k-1} + h_{k-2}}{x g_{k-1} + g_{k-2}}
=\frac{x d_{k-1}g_{k-1} +  d_{k-2} g_{k-2}}{x g_{k-1} +  g_{k-2}}\\
=&\left(\frac{x g_{k-1}}{x  g_{k-1} +  g_{k-2}}\right) d_{k-1}+
\left(\frac{  g_{k-2}}{x g_{k-1} +  g_{k-2}}\right) d_{k-2}
\end{aligned}$$

This tells us that $d_{k}$ is a convex combination of $d_{k-1}$ and $d_{k-2}$ with positive coefficients,  which implies that $d_{k}$ lies in the open interval $(d_{k-1},  d_{k-2})$ or $(d_{k-2},  d_{k-1})$ if $d_{k-1} \neq d_{k-2}$. Using this fact and the induction on $k$ we obtain that $d_{k}$ always lies in the open interval $(d_{0},  d_{1})$ or $(d_{1},  d_{0})$ when $d_{0} \neq d_{1}$, and $d_k=d_0$ when $d_1=d_0$.
\end{proof}

The following theorem can be derived  from Lemma \ref{lem3.1}:

\begin{thm}\label{thm3.1}

\begin{enumerate}[(1).]
\item
 If $h_{1}g_{0}-h_{0}g_{1}=\widetilde{\phi}(H(1), x)\widetilde{\phi}(G(0), x)-\widetilde{\phi}(H(0), x)\widetilde{\phi}(G(1), x)>0$ (which is equivalent to $d_{1}(x)>d_{0}(x)$) for all $x >0 $, then we have
$$\mathbb{E}(H(k))- \mathbb{E}(G(k))> \mathbb{E}(H(0))- \mathbb{E}(G(0)) \quad \text{ (for all $k > 0$.)}$$
\item If $h_{1}g_{0}-h_{0}g_{1}=\widetilde{\phi}(H(1), x)\widetilde{\phi}(G(0), x)-\widetilde{\phi}(H(0), x)\widetilde{\phi}(G(1), x)<0$(which is equivalent to $d_{1}(x)< d_{0}(x)$) for all $x >0 $,  then we have
    $$\mathbb{E}(H(k))- \mathbb{E}(G(k))> \mathbb{E}(H(1))- \mathbb{E}(G(1))\quad \text{ for all $k \neq 1$.}$$
\end{enumerate}
\end{thm}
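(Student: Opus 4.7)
The plan is to reduce both statements to pointwise comparisons of $d_k(x)$ with a fixed reference value ($d_0(x)$ in part (1), $d_1(x)$ in part (2)) on $(0,+\infty)$, and then integrate. The integral formula of Theorem \ref{thm1.1}, applied to the bipartite pair $(H(k),G(k))$ of common order $n+k$, gives
$$\mathbb{E}(H(k))-\mathbb{E}(G(k))=\frac{2}{\pi}\int_{0}^{+\infty}\ln d_{k}(x)\,dx,$$
so that any inequality of the form $\mathbb{E}(H(k))-\mathbb{E}(G(k)) > \mathbb{E}(H(j))-\mathbb{E}(G(j))$ is equivalent to the strict integral inequality $\int_{0}^{+\infty}\ln d_{k}(x)\,dx > \int_{0}^{+\infty}\ln d_{j}(x)\,dx$. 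Note that for $x>0$ we have $g_{0}(x),g_{1}(x)>0$, so the sign of $h_{1}g_{0}-h_{0}g_{1}$ is indeed the sign of $d_{1}(x)-d_{0}(x)$, confirming the equivalences stated in the theorem.

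For part (1), the hypothesis gives $d_{1}(x)>d_{0}(x)$ for every $x>0$. Lemma \ref{lem3.1}(1) then furnishes $d_{0}(x)<d_{k}(x)$ for all $k\geq 2$, and the case $k=1$ is just the hypothesis. Thus for every $k\geq 1$ and every $x>0$ we have the strict, continuous pointwise inequality $\ln d_{k}(x)>\ln d_{0}(x)$. Integrating over $(0,+\infty)$ preserves strictness (continuous strict inequality on a set of positive measure forces strict inequality of finite integrals), and multiplying by $2/\pi$ yields
$$\mathbb{E}(H(k))-\mathbb{E}(G(k)) > \mathbb{E}(H(0))-\mathbb{E}(G(0)),$$
which is the conclusion. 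Part (2) is the mirror image: the hypothesis and Lemma \ref{lem3.1}(2) together give $d_{k}(x)>d_{1}(x)$ for every $x>0$ whenever $k\neq 1$ (the case $k=0$ coming directly from $d_{0}>d_{1}$, and the cases $k\geq 2$ from the lemma). The same integration step then delivers $\mathbb{E}(H(k))-\mathbb{E}(G(k)) > \mathbb{E}(H(1))-\mathbb{E}(G(1))$ for all $k\neq 1$.

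The only place requiring some care is the final passage from a strict pointwise inequality of the integrands to a strict inequality of the two improper integrals. Since $d_k$ is a ratio of monic polynomials with positive leading coefficients, $\ln d_k(x)$ is continuous on $(0,+\infty)$ and the integrals in question are finite (they are actual energy differences of finite bipartite graphs), so the step is standard; I would simply note that strict continuous inequality on any open subinterval suffices. Beyond this minor analytical remark, the argument is a direct combination of Theorem \ref{thm1.1} and Lemma \ref{lem3.1}, with no further computations needed.
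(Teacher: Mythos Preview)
Your proof is correct and follows essentially the same approach as the paper: invoke Lemma~\ref{lem3.1} to get the pointwise inequality $d_k(x)>d_0(x)$ (resp.\ $d_k(x)>d_1(x)$) for the relevant $k$, then integrate via the formula of Theorem~\ref{thm1.1}. Your extra remarks justifying the sign equivalence and the passage to a strict integral inequality add a bit of rigor the paper leaves implicit, but the argument is otherwise identical.
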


\begin{proof}
(1). Since $d_{1}(x)>d_{0}(x)$ for all $x >0 $, by (1) of  Lemma \ref{lem3.1} we have $d_{k}(x) >d_{0}(x)$ for all $x>0$ and $k>0$.  So by (\ref{equ1.5}) we have

$\begin{aligned}
\mathbb{E}(H(k))- \mathbb{E}(G(k))&=\frac{2}{\pi}\int\limits_{0}^{+\infty}\ln\frac{\widetilde{\phi}(H(k), x)}{\widetilde{\phi}(G(k), x)}\text{d} x=\frac{2}{\pi}\int\limits_{0}^{+\infty}\ln d_{k}(x)\text{d} x\\
&>\frac{2}{\pi}\int\limits_{0}^{+\infty}\ln d_{0}(x)\text{d} x =\frac{2}{\pi}\int\limits_{0}^{+\infty}\ln\frac{\widetilde{\phi}(H(0), x)}{\widetilde{\phi}(G(0), x)}\text{d} x =\mathbb{E}(H(0))- \mathbb{E}(G(0))\quad (k>0).\\
\end{aligned}$

\noindent The proof of (2) is similar to that of (1).
\end{proof}

In \cite{shan+shao2010}, Shan et al.  show  that the fourth largest energy tree is either $P_{n}(2, 6, n-9)$ or $T_{n}(2, 2|2, 2)$ (see Fig.\ref{figstarlike} and Fig.\ref{figtree} for the definitions of these two graphs).
B. Huo  et al.\cite{huo2011complete} proved that the conjecture on the fourth maximal energy tree is true by showing that $\mathbb{E}(P_{n}(2, 6, n-9))> \mathbb{E}(T_{n}(2, 2|2, 2))$. Now by using Theorem \ref{thm3.1}, we are able to
give a simplified proof of the conjecture on the fourth maximal energy tree.
\begin{thm}\label{thmhuo} If $n \geq 10$, then
$$\mathbb{E}(P_{n}(2, 6, n-9))> \mathbb{E}(T_{n}(2, 2|2, 2))$$
\end{thm}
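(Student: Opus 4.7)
The plan is to realize both $P_n(2,6,n-9)$ and $T_n(2,2|2,2)$ as $k$-subdivision graphs of two fixed small bipartite trees on designated cut edges, and then apply Theorem \ref{thm3.1}(1). Fix a small base size $n_0$ (for instance $n_0=10$) and let $H=P_{n_0}(2,6,n_0-9)$ and $G=T_{n_0}(2,2|2,2)$. In each of these trees, pick a cut edge $e$ lying on the ``variable'' pendant path of the corresponding family, so that subdividing $e$ by $k$ times inside $H$ yields $H(k)=P_{n_0+k}(2,6,(n_0+k)-9)=P_n(2,6,n-9)$ with $n=n_0+k$, and likewise $G(k)=T_{n_0+k}(2,2|2,2)=T_n(2,2|2,2)$. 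Thus the theorem for all $n\ge n_0$ reduces to proving $\mathbb{E}(H(k))>\mathbb{E}(G(k))$ for every $k\ge 0$, and any residual cases $10\le n<n_0$ can be verified by direct numerical computation of the spectra.

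The next step is to verify the hypothesis of Theorem \ref{thm3.1}(1). Using Lemma \ref{lem2.1} (or directly enumerating $i$-matchings via the identity $b_{2i}(G)=m(G,i)$ noted after (\ref{equ1.2})), compute the four polynomials $g_0=\widetilde{\phi}(G(0),x)$, $g_1=\widetilde{\phi}(G(1),x)$, $h_0=\widetilde{\phi}(H(0),x)$ and $h_1=\widetilde{\phi}(H(1),x)$, each of modest degree $n_0$ or $n_0+1$. Form the polynomial
$$\Delta(x)\;=\;h_1(x)\,g_0(x)-h_0(x)\,g_1(x).$$
Since all four factors are polynomials in $x^2$ with non-negative coefficients, so is $\Delta$. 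The goal is to show that every coefficient of $\Delta$ is non-negative, with at least one strictly positive; this yields $\Delta(x)>0$ for all $x>0$, i.e. $d_1(x)>d_0(x)$.

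Finally, verify the base-case energy inequality $\mathbb{E}(H(0))\ge\mathbb{E}(G(0))$ by computing the eigenvalues of the two trees of order $n_0$ directly. Then Theorem \ref{thm3.1}(1) delivers
$$\mathbb{E}(H(k))-\mathbb{E}(G(k))\;>\;\mathbb{E}(H(0))-\mathbb{E}(G(0))\;\ge\;0\qquad(k>0),$$
establishing $\mathbb{E}(P_n(2,6,n-9))>\mathbb{E}(T_n(2,2|2,2))$ for all $n\ge n_0$, which together with the finitely many smaller $n$ completes the proof.

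The main obstacle is the positivity check for $\Delta(x)$. The inequality must go in the correct direction: if a naive choice of the two cut edges happens to produce $\Delta(x)<0$ instead, one lands in case (2) of Theorem \ref{thm3.1} and must instead compare $\mathbb{E}(H(1))$ with $\mathbb{E}(G(1))$ as the base. Choosing the cut edges $e$ in $H$ and $e'$ in $G$ so that the resulting sign of $\Delta$ matches the desired direction of the energy comparison is the delicate part; the ensuing coefficient-by-coefficient verification of $\Delta\succcurlyeq 0$ is tedious but mechanical.
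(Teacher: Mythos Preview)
Your proposal is correct and is essentially the paper's own proof: with $n_0=10$, the paper takes $H=P_{10}(2,6,1)$, $G=T_{10}(2,2|2,2)$, computes the four polynomials explicitly, verifies that $h_1g_0-h_0g_1=2x^{15}+22x^{13}+\cdots+9x^3$ has all positive coefficients, and checks numerically that $\mathbb{E}(H(0))-\mathbb{E}(G(0))\approx 0.0127>0$, then applies Theorem~\ref{thm3.1}(1). Two small slips to fix: for $G$ the subdivided edge is the \emph{central} edge between the two degree-$3$ vertices (not a pendant path), and your sentence ``so is $\Delta$'' is misleading, since non-negativity of the coefficients of a \emph{difference} does not follow from that of its terms---it must be (and here can be) verified directly.
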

\begin{proof}
Let $H=P_{10}(2, 6, 1)$ and $G=T_{10}(2, 2|2, 2)$, $e$ be the pendent edge on the pendent path of length 1 in $H$, and $e'$ be the edge between the two vertices of degree 3 in $G$. Then we have $P_{n}(2, 6, n-9)=H(n-10)$ and $T_{n}(2, 2| 2, 2)=G(n-10)$.
By some directly calculations,  we have

$$\begin{aligned}
\widetilde{\phi}(H(0), x)&=\widetilde{\phi}(P_{10}(2, 6, 1), x)={x}^{10}+9\, {x}^{8}+27\, {x}^{6}+31\, {x}^{4}+12\, {x}^{2}+1,\\
\widetilde{\phi}(G(0), x)&=\widetilde{\phi}(T_{10}(2, 2|2, 2), x)={x}^{10}+9\, {x}^{8}+26\, {x}^{6}+30\, {x}^{4}+13\, {x}^{2}+1,\\
\widetilde{\phi}(H(1), x)&=\widetilde{\phi}(P_{11}(2, 6, 2), x)={x}^{11}+10\, {x}^{9}+35\, {x}^{7}+52\, {x}^{5}+32\, {x}^{3}+6\, x,\\
\widetilde{\phi}(G(1), x)&=\widetilde{\phi}(T_{11}(2, 2|2, 2), x)={x}^{11}+10\, {x}^{9}+34\, {x}^{7}+48\, {x}^{5}+29\, {x}^{3}+6\, x.
\end{aligned}
$$

\noindent So we have

$\widetilde{\phi}(H(1), x)\widetilde{\phi}(G(0), x)-\widetilde{\phi}(H(0), x)\widetilde{\phi}(G(1), x)=2 {x}^{15}+22 {x}^{13}+89 {x}^{11}+168 {x}^{9}+156 {x}^{7}+66 {x}^{5}+9 {x}^{3} > 0\ (x>0). $

\noindent Also by using computer we can obtain

$\mathbb{E}(H(0))\doteq 11.937511$, \quad $\mathbb{E}(G(0))\doteq11.924777$, \quad So $\mathbb{E}(H(0))-\mathbb{E}(G(0))\doteq 0.012734 >0.$

\noindent So by Theorem \ref{thm3.1} we have for $n \geq 10$,\\[0.2cm]
\hspace*{1cm}$\mathbb{E}(P_{n}(2, 6, n-9)) - \mathbb{E}(T_{n}(2, 2|2, 2))=\mathbb{E}(H(n-10))-\mathbb{E}(G(n-10))\geq \mathbb{E}(H(0))-\mathbb{E}(G(0)) >0.$
\end{proof}

Combining Theorem \ref{thmhuo} with the result that the fourth largest energy tree is either $P_{n}(2, 6, n-9)$ or $T_{n}(2, 2|2, 2)$ (\cite{shan+shao2010}), we conclude that the fourth maximal energy tree is $P_{n}(2,6,n-9)$.

\vskip 0.3cm

\noindent {\bf Remark:}
Here we would like to mention that,  the main points of the simplification in the proof of  Theorem \ref{thmhuo} are:

1. We use the integral formula (\ref{equ1.5}) (instead of (\ref{equ1.3})) which uses the real polynomial $\widetilde{\phi}(G_{j}, x)$ instead of the complex polynomial $\phi(G_{j}, ix)$ for $j=1,2$.

2. The recurrence relation $(\ref{equ2.1})$ for $\widetilde{\phi}(G(k), x)$ allows us to use Lemma \ref{lem3.1} to directly compare $d_{k}(x)$ and $d_{0}(x)$ (namely directly compare the integrands $\ln d_{k}(x)$ and $\ln d_{0}(x)$ in the formula (\ref{equ1.5})
for $\mathbb{E}(H(k))- \mathbb{E}(G(k))$ and $\mathbb{E}(H(0))- \mathbb{E}(G(0))$),  without the need of solving  the recurrence relation $(\ref{equ2.1})$  to obtain explicit expressions for $h_{k}=\widetilde{\phi}(H(k), x)$ and $g_{k}=\widetilde{\phi}(G(k), x)$. \qed

\vskip 0.3cm

Notice that in Theorem \ref{thm3.1}, we need either $d_{1}(x) > d_{0}(x)$ for all $x >0$ or $d_{0}(x) > d_{1}(x)$ for all $x >0$. Now if both of these two conditions do not hold,  then both $d_{0}(x)$ and $d_{1}(x)$ are not a lower bound for $d_{k}(x)$ ($k \geq 2$). Although in this case we can not use Theorem \ref{thm3.1}, but by Lemma \ref{lem3.1} we still have $\min\{d_{0}(x),d_{1}(x)\}$ as a lower bound for $d_{k}(x)$  (for all $x >0$). Thus we can still have the following lower bound (which is independent of $k$) for  $\mathbb{E}(H(k))- \mathbb{E}(G(k))$.

\begin{thm}\label{engergycompare}
Let $G(k)$, $H(k)$ be $k$-subdivision graphs of   bipartite graphs $G$ and $H$ on some cut edges. Let $d_{k}(x)= \displaystyle\frac{\widetilde{\phi}(H(k), x)}{\widetilde{\phi}(G(k), x)}$ and let
 ${D}=\{x>0 |d_{0}(x) > d_{1}(x)\}$,  Let ${D^{C}}$ be the complement of $D$ in $(0, \infty)$.  Then :
 \begin{equation}\label{equ3.1}
 \mathbb{E}(H(k))- \mathbb{E}(G(k))\geq
 \frac{2}{\pi}\int\limits_{0}^{+\infty}\ln  \min\{d_{0}(x),d_{1}(x)\}\text{d}x
  =\frac{2}{\pi}\int\limits_{D}\ln  d_{1}(x)\text{d}x+\frac{2}{\pi}\int\limits_{D^{C}}\ln d_{0}(x) \text{d}x
 \end{equation}
where the right hand side of (\ref{equ3.1}) can also be written as:

 \begin{align}\label{equ3.2}
 &\frac{2}{\pi}\int\limits_{D}\ln  d_{1}(x)\text{d}x+\frac{2}{\pi}\int\limits_{D^{C}}\ln d_{0}(x) \text{d}x
  = \frac{2}{\pi}\int\limits_{0}^{+\infty}\ln d_{1}(x)\text{d}x - \frac{2}{\pi}\int\limits_{D^{C}}\ln d_{1}(x)\text{d}x  + \frac{2}{\pi}\int\limits_{D_{C}}\ln d_{0}(x)\text{d}x\notag\\
 =&
 \mathbb{E}(H(1))-\mathbb{E}(G(1))-\frac{2}{\pi}\int\limits_{D^{C}}\ln\frac{d_{1}(x)}{d_{0}(x)}\text{d}x
 \end{align}
 \noindent or equivalently,
 \begin{align}\label{equ3.3}
 &\frac{2}{\pi}\int\limits_{D}\ln  d_{1}(x)\text{d}x+\frac{2}{\pi}\int\limits_{D^{C}}\ln d_{0}(x) \text{d}x
  =\mathbb{E}(H(0))-\mathbb{E}(G(0))+\frac{2}{\pi}\int\limits_{{D}}\ln\frac{d_{1}(x)}{d_{0}(x)}\text{d}x
 \end{align}
\end{thm}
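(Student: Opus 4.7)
The plan is to integrate the pointwise bound supplied by Lemma \ref{lem3.1}. Because $H(k)$ and $G(k)$ are bipartite graphs of common order $n+k$, Theorem \ref{thm1.1} gives
$$\mathbb{E}(H(k)) - \mathbb{E}(G(k)) = \frac{2}{\pi}\int_{0}^{+\infty} \ln d_k(x)\,\mathrm{d}x,$$
and $d_k(x)>0$ for $x>0$ since the coefficients of $\widetilde{\phi}(\cdot,x)$ are nonnegative with leading coefficient $1$. The parenthetical remark that accompanies Lemma \ref{lem3.1} supplies the uniform pointwise bound $d_k(x)\geq \min\{d_0(x),d_1(x)\}$ for every $x>0$ and every $k\geq 0$.

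Combining these two ingredients with the monotonicity of $\ln$ yields
$$\mathbb{E}(H(k)) - \mathbb{E}(G(k)) \;\geq\; \frac{2}{\pi}\int_0^{+\infty}\ln \min\{d_0(x),d_1(x)\}\,\mathrm{d}x,$$
which is the inequality asserted in (\ref{equ3.1}). The subsequent equality follows immediately from the definition $D=\{x>0:d_0(x)>d_1(x)\}$: on $D$ the minimum equals $d_1(x)$ while on the complement $D^C$ it equals $d_0(x)$, so splitting the integral accordingly produces the stated decomposition.

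The equivalent forms (\ref{equ3.2}) and (\ref{equ3.3}) are pure bookkeeping. For (\ref{equ3.2}) I rewrite
$$\tfrac{2}{\pi}\int_{D^C}\ln d_0(x)\,\mathrm{d}x \;=\; \tfrac{2}{\pi}\int_{D^C}\ln d_1(x)\,\mathrm{d}x \;-\; \tfrac{2}{\pi}\int_{D^C}\ln\frac{d_1(x)}{d_0(x)}\,\mathrm{d}x,$$
combine the first term on the right with $\tfrac{2}{\pi}\int_D \ln d_1$ to reassemble $\tfrac{2}{\pi}\int_0^{+\infty}\ln d_1(x)\,\mathrm{d}x$, and recognize this by Theorem \ref{thm1.1} as $\mathbb{E}(H(1))-\mathbb{E}(G(1))$. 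For (\ref{equ3.3}) I proceed symmetrically, writing $\tfrac{2}{\pi}\int_D \ln d_1 = \tfrac{2}{\pi}\int_D \ln d_0 + \tfrac{2}{\pi}\int_D \ln(d_1/d_0)$ and identifying $\tfrac{2}{\pi}\int_0^{+\infty}\ln d_0(x)\,\mathrm{d}x = \mathbb{E}(H(0))-\mathbb{E}(G(0))$.

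No serious obstacle is expected: once Lemma \ref{lem3.1} is in hand, the theorem is essentially its integral consequence through the representation in Theorem \ref{thm1.1}, and the alternative forms are algebraic manipulations that rely only on the identities $\int_0^{+\infty} = \int_D + \int_{D^C}$ and $\ln(d_1/d_0)=\ln d_1 - \ln d_0$. The only minor technical point to note is the integrability of the pieces $\int_{D^C}\ln(d_1/d_0)$ and $\int_D \ln(d_1/d_0)$, which is inherited from the finiteness of the energy differences $\mathbb{E}(H(j))-\mathbb{E}(G(j))$ for $j=0,1$ guaranteed by Theorem \ref{thm1.1}.
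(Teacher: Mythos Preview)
Your proposal is correct and matches the paper's own reasoning: the paper does not give a separate formal proof of this theorem but instead explains in the paragraph preceding it that Lemma~\ref{lem3.1} yields $\min\{d_0(x),d_1(x)\}$ as a pointwise lower bound for $d_k(x)$, from which the inequality in~(\ref{equ3.1}) follows via the integral representation of Theorem~\ref{thm1.1}. Your derivations of the alternative forms~(\ref{equ3.2}) and~(\ref{equ3.3}) by splitting $(0,\infty)=D\cup D^C$ and using $\ln(d_1/d_0)=\ln d_1-\ln d_0$ are exactly the intended bookkeeping.
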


\noindent Theorem \ref{engergycompare} will be used several times in \S 4 and \S 5 in the proof of our main results.

\section{Some upper bounds for the energies of non-starlike trees }

In the following discussions, we will divide the trees into two classes. One is called the starlike trees, and the other one is the non-starlike trees. In this section,  We will give some upper bounds for  the energies of the non-starlike trees. We will show that the energy of a non-starlike tree is bounded above either by the energy of $P_n(1,2,n-4)$, or by the energy of $T_n(2,2|2,2)$ (see Fig.\ref{figstarlike} and Fig.\ref{figtree}).

 Let $N_{3}(G)$ be the number of vertices in $G$ with degree at least 3, and $\Delta (G)$ be the maximal degree of $G$. A tree $T$ is called  starlike if $N_{3}(T)\le 1$, and is called non-starlike if $N_{3}(T)\ge 2$.

 It is easy to see that if $N_{3}(T)=0$, then $T$ is the path $P_n$. Now if $N_{3}(T)=1$, then $T$ consists of some internally disjoint pendent paths starting from its unique vertex with degree at least 3. Suppose that the lengths of these pendent paths are positive integers $a_{1},  a_{2}, \cdots,  a_{k}$.
Then we denote this tree $T$ by $P_{n}(a_{1},  a_{2}, \cdots,  a_{k})$,  where $a_{1}+a_2+\cdots+a_{k}=n-1$ and $k=\Delta (T)$ (see Fig.\ref{figstarlike}). Sometimes we also denote
$P_{n}(a_{1},  a_{2}, \cdots,  a_{k})$ by   $P_{n}(a_{1},  a_{2}, \cdots,  a_{k-1}, *)$,  since  $*$  is uniquely
determined by $n$ and $a_{1},  a_{2}, \cdots,  a_{k-1}$.

\begin{figure}[h]
 \begin{minipage}[t]{0.5\linewidth}
 \centering
\begin{center}
\begin{tikzpicture}

 \coordinate (O) at (0:0);
 \foreach \r/\n in {0.6/1, 1.4/2, 2.0/3}
 \coordinate (A\n) at (-45:\r);

 \foreach \r/\n in {0.6/1, 1.4/2, 2.0/3}
 \coordinate (B\n) at (-15:\r);

 \foreach \r/\n in {0.6/1, 1.4/2, 2.0/3}
 \coordinate (C\n) at (15:\r);

 \foreach \r/\n in {0.6/1, 1.4/2, 2.0/3}
 \coordinate (D\n) at (45:\r);

 \foreach \point in {A, B, C, D}{
 \foreach \n in {1, 2, 3}
 {\fill [black] (\point\n) circle (1.5pt);}
 }
\fill [black] (O) circle (1.5pt);

 \foreach \n in {A, B, C, D}{
\draw (O)--(\n1)(\n2)--(\n3);
\draw [decoration={brace, raise=2.5pt},  decorate] (\n1) -- (\n3);
}
\foreach \d in {-45, -15, 15, 45}{
\node[rotate=\d] at (\d:1) {$\cdots$};
}

\node[rotate=90] at (0:2) {$\cdots$};

\node  at (65:1.3) {\small$P_{a_{1}}$};

\node  at (32:1.3) {\small$P_{a_{2}}$};

\node  at (0:1.4) {\small$P_{a_{k-1}}$};

\node  at (-30:1.4) {\small$P_{a_{k}}$};

\end{tikzpicture}
\end{center}
  \vspace*{-0.6cm}\caption{The starlike tree $P_{n}(a_{1},  a_{2}, \cdots,  a_{k})$}\label{figstarlike}
 \end{minipage}%
 \begin{minipage}[t]{0.5\linewidth}
\begin{center}
\begin{tikzpicture}
\tikzstyle myline=[line width=0.8pt]
\coordinate (A) at (-1.2, 0);
\coordinate (B) at (-0.5, 0);
\coordinate (C) at (0.5, 0);
\coordinate (D) at (1.2, 0);
\coordinate (E) at (-1.5, 0.3);
\coordinate (F) at (-2.1, 0.9);
\coordinate (G) at (-2.4, 1.2);
\coordinate (E1) at (1.5, 0.3);
\coordinate (F1) at (2.1, 0.9);
\coordinate (G1) at (2.4, 1.2);
\coordinate (E2) at (-1.5, -0.3);
\coordinate (F2) at (-2.1, -0.9);
\coordinate (G2) at (-2.4, -1.2);
\coordinate (E3) at (1.5, -0.3);
\coordinate (F3) at (2.1, -0.9);
\coordinate (G3) at (2.4, -1.2);
\node[rotate=135] at (-1.8, 0.6) {$\cdots$};
\node[rotate=45] at (1.8, 0.6) {$\cdots$};
\node[rotate=-45] at (1.8, -0.6) {$\cdots$};
\node[rotate=-135] at (-1.8, -0.6) {$\cdots$};
\draw (A)--(B);
\draw (C)--(D);
\node at (0, 0) {$\cdots$};
\draw (F)--(G);
\draw (F3)--(G3);
\draw (F2)--(G2);
\draw (F1)--(G1);
\draw (A)--(E2);
\draw (A)--(E);
\draw (D)--(E1);
\draw (D)--(E3);
\draw [decoration={brace, raise=2.5pt},  decorate] (G) -- (E);
\draw [decoration={brace, raise=2.5pt},  decorate] (G2) -- (E2);
\draw [decoration={brace, raise=2.5pt},  decorate] (E1) -- (G1);
\draw [decoration={brace, raise=2.5pt},  decorate] (E3) -- (G3);
\node at (-1.6, 1.0) {$P_{a}$};
\node at (-2.3, -0.5) {$P_{ b}$};
\node at (1.6, 1.0) {$P_{c}$};
\node at (2.3, -0.5) {$P_{d}$};
\foreach \point in {A, B, C, D, E, F, G, E1, F1, G1, E2, F2, G2, E3, F3, G3}
{\fill [black] (\point) circle (1.5pt);}
\end{tikzpicture}
\end{center}
    \vspace*{-0.9cm}\caption{The tree $T_{n}(a, b|c, d)$}\label{figtree}
 \end{minipage}
 \end{figure}

Let $a, b, c, d$ be positive integers with $a+b+c+d\le n-2$. Let $T_{n}(a, b|c, d)$ be the tree of order $n$ obtained by attaching two pendent paths of lengths $a$ and $b$ to one end vertex of the path $P_{n-a-b-c-d}$,
and attaching two pendent paths of lengths $c$ and $d$ to another end vertex of the path $P_{n-a-b-c-d}$ (see Fig.\ref{figtree}).

It is not difficult to see that if $T$ is a tree of order $n$ with $\Delta(T)=3$ and $N_{3}(T)=2$,  then $T$ must be of the form $T_{n}(a, b|c, d)$, where $a+b+c+d\le n-2$.

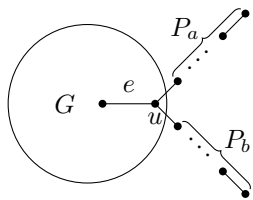
\begin{figure}[h]
\begin{center}
\begin{tikzpicture}
\tikzstyle myline=[line width=0.8pt]
\coordinate (C) at (0.5,0);
\coordinate (D) at (1.2,0) ;
\coordinate (E1) at (1.5,0.3);
\coordinate (F1) at (2.1,0.9);
\coordinate (G1) at (2.4,1.2);
\coordinate (E2) at (-1.5,-0.3);
\coordinate (F2) at (-2.1,-0.9);
\coordinate (G2) at (-2.4,-1.2);
\coordinate (E3) at (1.5,-0.3);
\coordinate (F3) at (2.1,-0.9);
\coordinate (G3) at (2.4,-1.2);
\node at (1.2,-0.2){$u$};
\node[rotate=45] at (1.8,0.6) {$\cdots$};
\node[rotate=-45] at (1.8,-0.6) {$\cdots$};
\draw (C)--(D)  node [midway, above=1pt] {$e$};;
\draw (F1)--(G1);
\draw (F3)--(G3);
\draw (D)--(E1);
\draw (D)--(E3);
\draw [decoration={brace,raise=2.5pt}, decorate] (E1) -- (G1);
\draw [decoration={brace,raise=2.5pt}, decorate] (E3) -- (G3);
\node at (1.6,1.0) {$P_{a}$};
\node at (2.3,-0.5) {$P_{b}$};
\node at (0,0) {$G$};
\foreach \point in {C,D,E1,F1,G1,E3,F3,G3}
{\fill [black] (\point) circle (1.5pt);}
\draw (0.3,0) circle (30pt);
\end{tikzpicture}
\end{center}\vspace{-0.5cm}
  \caption{The graph $G_{u}(a,b)$}\label{figsamegraft}
\end{figure}

In \cite{shan+shao2010} and \cite{shangraftUOB},  Shan et al. studied how graph energies change under edge grafting operations on unicyclic or bipartite graphs and proved the following result in the comparison of the quasi-order on unicyclic or bipartite graphs:

\begin{lem}(\cite{shan+shao2010}, The edge grafting operation)\label{samegraft}
Let $u$ be a vertex of a graph $G$.  Denote $G_{u}(a, b)$  the graph  obtained by attaching to $G$ two (new) pendent paths
of lengths $a$ and $b$ at $u$. Let $a, b, c, d$ be nonnegative integers with $a+b=c+d$. Assume that  $0 \le a \le b,  \  0 \le c \le d$ and $a<c$.
If $u$ is a non-isolated vertex of a unicyclic or bipartite graph $G$, then the following statements are true:
\begin{enumerate}[(1).]
    \item If $a$ is even,  then $G_{u}(a, b) \succ G_{u}(c, d)$.
    \item If $a$ is odd,  then  $G_{u}(a, b) \prec G_{u}(c, d)$.
\end{enumerate}
\end{lem}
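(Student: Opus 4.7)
The plan is to reduce the entire comparison to a single elementary shift $(a, b) \mapsto (a+1, b-1)$, valid whenever $b \ge a + 2$, establish a clean closed-form identity for the resulting $\widetilde{\phi}$-difference, and then telescope $k = c - a$ such shifts so that the sign is governed uniformly by the parity of $a$.

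For the single-step identity, I would apply Lemma~\ref{lem2.1} twice: once to the pendent edge $uv_1$ on the $a$-side path and once to $uw_1$ on the $b$-side path of $G_u(a, b)$. Writing $p_k(x) = \phi(P_k, x)$ for the $k$-vertex path, this yields
\[
\phi(G_u(a, b), x) = p_a(x) p_b(x) \, \phi(G, x) - \phi(G - u, x) \bigl[p_{a-1}(x) p_b(x) + p_a(x) p_{b-1}(x)\bigr].
\]
Subtracting the analogous formula for $G_u(a+1, b-1)$ and repeatedly using the path identity $p_m p_n - p_{m-1} p_{n-1} = p_{m+n}$ (itself a consequence of Lemma~\ref{lem2.1} on a path edge), the two differences $p_a p_b - p_{a+1} p_{b-1}$ and $p_{a-1} p_b - p_{a+1} p_{b-2}$ collapse to $-p_{b-a-2}$ and $-x\, p_{b-a-2}$ respectively, giving
\[
\phi(G_u(a, b)) - \phi(G_u(a+1, b-1)) = p_{b-a-2}(x) \bigl[x \phi(G - u) - \phi(G)\bigr].
\]
For a tree $G$, the standard vertex recurrence simplifies the bracket to $\sum_{w \sim u} \phi(G - u - w, x)$. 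Passing to $\widetilde{\phi}$ via (\ref{equ1.4}) and tracking the powers of $i$ introduces the explicit factor $(-1)^a$, so that
\[
\widetilde{\phi}(G_u(a, b)) - \widetilde{\phi}(G_u(a+1, b-1)) = (-1)^a \, \widetilde{p}_{b-a-2}(x) \, S(x),
\]
where $\widetilde{p}_k = \widetilde{\phi}(P_k, x)$ obeys $\widetilde{p}_k = x \widetilde{p}_{k-1} + \widetilde{p}_{k-2}$ by Theorem~\ref{thm2.2}, and $S(x) = \sum_{w \sim u} \widetilde{\phi}(G - u - w, x)$ has nonnegative coefficients.

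Telescoping the single-step identity along $(a, b), (a+1, b-1), \ldots, (c, d) = (a+k, b-k)$ produces
\[
\widetilde{\phi}(G_u(a, b)) - \widetilde{\phi}(G_u(c, d)) = (-1)^a \, S(x) \sum_{j = 0}^{k-1} (-1)^j \widetilde{p}_{b - a - 2 - 2j}(x).
\]
The alternating sum in $j$ has only nonnegative coefficients: applying $\widetilde{p}_m - \widetilde{p}_{m - 2} = x\, \widetilde{p}_{m - 1}$ (from Theorem~\ref{thm2.2} on a path) collapses each consecutive pair into a single $x \widetilde{p}_{\cdot}$ term, and if $k$ is odd a surplus $+\widetilde{p}_{b - a - 2k}$ survives. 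Since $u$ is non-isolated, $S(x)$ is a nonzero polynomial with nonnegative coefficients, so the whole right-hand side carries a strict sign equal to $(-1)^a$, proving cases (1) and (2).

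The main obstacle is extending the derivation from trees to the general unicyclic or bipartite setting. When $G$ has a cycle through $u$, the vertex recurrence acquires an extra term $2 \sum_{C \ni u} \phi(G - V(C))$, and for a non-bipartite unicyclic $G$ the transform (\ref{equ1.4}) fails altogether because $\phi(G, x)$ has nonzero odd-degree coefficients. I would handle this by expanding $a_i(G_u(a, b))$ directly via Sachs's formula into a matching contribution and a cycle contribution, applying the tree argument above verbatim to the matching part, and analysing the cycle part in two subcases: if $C \not\ni u$, then $G_u(a, b) - V(C) = (G - V(C))_u(a, b)$ and the same telescoping applies on the smaller graph; if $C \ni u$, then $G_u(a, b) - V(C) = (G - V(C)) \sqcup P_a \sqcup P_b$, whose matching generating function factors as a product of path polynomials for which the alternating identities collapse in the same way. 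In each subcase the sign $(-1)^a$ is preserved, and the cycle correction contributes a polynomial of strictly smaller degree than $S(x)$, so the bipartite/unicyclic conclusion follows after this additional bookkeeping.
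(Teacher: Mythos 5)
First, a point of reference: the paper does not prove this lemma at all --- it is imported verbatim from \cite{shan+shao2010} (with the different-vertex variant coming from \cite{shangraftUOB}) --- so there is no internal proof to compare yours against, and I can only judge the argument on its own terms. For a \emph{forest} $G$ your argument is correct and complete. The two cut-edge deletions giving $\phi(G_u(a,b))=p_ap_b\,\phi(G)-\bigl[p_{a-1}p_b+p_ap_{b-1}\bigr]\phi(G-u)$ are right; the identities $p_ap_b-p_{a+1}p_{b-1}=-p_{b-a-2}$ and $p_{a-1}p_b-p_{a+1}p_{b-2}=-x\,p_{b-a-2}$ hold for $b\ge a+2$; the power-of-$i$ bookkeeping does produce exactly the factor $(-1)^a$ (I checked: the order mismatch $i^{\,n+b-a-2}/i^{\,n+a+b}=i^{-2a-2}$ combines with the sign from $i^{-2}S$ to give $(-1)^a$); and the telescoped sum $\sum_{j}(-1)^j\widetilde{p}_{b-a-2-2j}$ collapses to nonnegative terms via $\widetilde{p}_m-\widetilde{p}_{m-2}=x\widetilde{p}_{m-1}$, with the hypothesis $c\le d$ guaranteeing that every index stays nonnegative. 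Since the present paper only ever applies the lemma to trees, this part alone carries all the downstream results.

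The gap is in the extension to the stated generality ``unicyclic or bipartite''. Two concrete problems. (a) Your closing step --- that the cycle correction ``contributes a polynomial of strictly smaller degree than $S(x)$, so the conclusion follows'' --- is not a sign argument: the lemma is a coefficientwise statement, and a lower-degree correction of the wrong sign can perfectly well overturn the sign of an individual coefficient $b_i$. Indeed, for a bipartite $G$ with cycles the relevant bracket becomes $\widetilde{\phi}(G)-x\widetilde{\phi}(G-u)=S(x)-2\sum_{C\ni u}(-1)^{|C|/2}\widetilde{\phi}(G-V(C))$, so every cycle through $u$ of length $\equiv 0 \pmod 4$ enters \emph{negatively}, and the nonnegativity you need is no longer automatic --- it requires a separate (true but nontrivial) inequality. (b) For a non-bipartite unicyclic $G$ the substitution identity (\ref{equ1.4}) is simply unavailable, so the passage from $\phi$ to $\widetilde{\phi}$ that generated the factor $(-1)^a$ has no meaning there; one must instead work directly with $b_i=|a_i|$ and invoke the known sign structure of the characteristic polynomial of a unicyclic graph, which itself splits into cases according to the cycle length modulo $4$. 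This is exactly where the substance of the cited reference lies, and your sketch does not yet supply it. As written, you have a clean proof for forests but not of the lemma as stated.
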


If $a=0$,  then we say that $G_{u}(0, b)$ is obtained from $G_{u}(c, d)$ by a $total$ $edge$ $grafting$ operation.

\vskip 0.2cm

The following result in \cite{shan+shao2010} was obtained directly by using the edge grafting operation.

\begin{thm}\cite{shan+shao2010}\label{thmrich}
 Let $T$  be a tree of order $n$ with $N_3(T)\ge 2$.  Then there exists a tree $T'$ of order $n$ with $N_{3}(T')=N_{3}(T)-1$ and $\Delta(T')=\Delta(T)$ such that\ $T \prec T'$.
\end{thm}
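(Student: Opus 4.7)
The plan is to reduce $N_3$ by exactly one via iterated total edge grafting at a carefully chosen vertex of degree at least $3$, while preserving $\Delta(T)$. Since $N_3(T) \geq 2$ forces $\Delta(T) \geq 3$, I first pick a vertex $u$ realizing $\Delta(T)$, and set $S := \{x \in V(T) : \deg_T(x) \geq 3\}$, so that $u \in S$ and $|S| \geq 2$. Among $S \setminus \{u\}$ I choose $v$ at maximum distance from $u$ in $T$. Exactly one branch at $v$ contains the neighbor of $v$ on the $u$-$v$ path; every other branch at $v$ (there are $\deg_T(v) - 1 \geq 2$ of them) has all internal vertices of degree $\leq 2$ in $T$, hence is a pendant path, because any vertex of $S$ in such a branch would lie strictly farther from $u$ than $v$, contradicting our choice.

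Next, let the pendant path branches at $v$ have lengths $c_1 \leq c_2 \leq \cdots \leq c_r$ with $r = \deg_T(v) - 1 \geq 2$, and let $G$ denote $T$ with the pendant paths of lengths $c_1, c_2$ removed. Then $T = G_v(c_1, c_2)$, and a total edge grafting produces $T_1 := G_v(0, c_1+c_2)$, with $T \prec T_1$ by Lemma~\ref{samegraft}(1) applied with $a=0$ (even), $b = c_1+c_2$, $c = c_1$, $d = c_2$, noting $0 = a < c_1 = c$. In $T_1$ the vertex $v$ still has $r-1$ pendant path branches (plus the branch toward $u$), so if $r - 1 \geq 2$ I iterate the same operation. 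After $r-1$ total edge graftings I obtain
\[
T = T_0 \prec T_1 \prec \cdots \prec T_{r-1} =: T',
\]
in which $v$ has degree exactly $2$.

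By transitivity of $\prec$ on bipartite graphs, $T \prec T'$. Each operation modifies only edges of pendant paths at $v$, so the degrees of all vertices other than $v$ are unchanged; thus $N_3(T') = N_3(T) - 1$. Since $u$ is untouched throughout, $u$ still has degree $\Delta(T)$ in $T'$, and no other vertex has its degree raised, so $\Delta(T') = \Delta(T)$. The delicate point, and hence the main obstacle, is the simultaneous requirement that $v$ possess enough pendant path branches to support the full sequence of graftings and that the operations not lower $\Delta$. Choosing $v$ to be a vertex of $S \setminus \{u\}$ farthest from a max-degree vertex $u$ handles both: (i) holds because no element of $S$ lies in any branch of $v$ other than the one containing $u$, and (ii) holds because $u$'s incident edges, and thus its degree, are never altered by any of the graftings performed at $v$.
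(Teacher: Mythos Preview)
Your proof is correct. The paper does not actually supply a proof of this theorem; it quotes the result from \cite{shan+shao2010} with the remark that it ``was obtained directly by using the edge grafting operation,'' and your argument does precisely that---repeated total edge grafting (Lemma~\ref{samegraft} with $a=0$) at a vertex $v\in S\setminus\{u\}$ farthest from a maximum-degree vertex $u$, which guarantees both that all non-$u$ branches at $v$ are pendent paths and that $\Delta$ is preserved.
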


In the followings, we will give some upper bounds for the energies of the trees of the form $T_{n}(a, b|c, d)$. First we consider the case $1\in \{a,b,c,d\}$ in the following Theorem \ref{thm4.2}. The other case where $min \{a,b,c,d\}\ge 2$ will be considered in Lemma \ref{lem4.3}, \ref{lem4.4} and Theorem \ref{thmtabcd}.

\begin{thm}\label{thm4.2}\cite{shan+shao2010}
 Let $T=T_{n}(1, b|c, d)$. Then $T\prec P_n(1,2,n-4)$.
\end{thm}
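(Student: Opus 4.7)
The plan is to deduce the quasi-order relation $T_{n}(1,b|c,d)\prec P_{n}(1,2,n-4)$ by two successive applications of the edge-grafting Lemma \ref{samegraft}: first collapse the two pendent paths at one branch vertex of $T_{n}(1,b|c,d)$ into a single pendent path to produce a starlike tree, then rearrange the arms of that starlike tree at its center until they become $(1,2,n-4)$.

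For the first step, let $v$ denote the branch vertex of $T_{n}(1,b|c,d)$ that carries the pendent paths $P_{c}$ and $P_{d}$, and let $G$ be the subtree obtained by deleting those two pendent paths. In the notation of Lemma \ref{samegraft}, $T_{n}(1,b|c,d)=G_{v}(c,d)$, while $G_{v}(0,c+d)$ is the graph obtained by attaching a single pendent path of length $c+d$ at $v$; the latter turns $v$ into a degree-$2$ vertex and yields the starlike tree with center $u$ whose arms have lengths $1$, $b$, and $n-b-2$ (the third arm coming from the original internal $u$-$v$ path of length $\ell=n-b-c-d-2$ extended by the new pendent of length $c+d$). Since the smaller arm $0$ is even and $0<c$, Lemma \ref{samegraft}(1) gives $G_{v}(0,c+d)\succ G_{v}(c,d)$, that is,
\[ T_{n}(1,b|c,d)\prec P_{n}(1,b,n-b-2). \]

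For the second step, compare $P_{n}(1,b,n-b-2)$ with $P_{n}(1,2,n-4)$ by invoking Lemma \ref{samegraft} at the center $u$, keeping the pendent path of length $1$ as part of the base graph and regrafting the remaining two arms. Both pairs of arms sum to $n-2$, so the lemma applies. Put $m=\min\{b,n-b-2\}$. If $m=2$ (i.e.\ $b\in\{2,n-4\}$), the two trees coincide; if $m=1$ (i.e.\ $b\in\{1,n-3\}$), then $1$ is odd and $1<2$, so Lemma \ref{samegraft}(2) gives $P_{n}(1,1,n-3)\prec P_{n}(1,2,n-4)$; if $m\ge 3$, then $2$ is even and $2<m$, so Lemma \ref{samegraft}(1) gives $P_{n}(1,2,n-4)\succ P_{n}(1,b,n-b-2)$. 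In every case $P_{n}(1,b,n-b-2)\preccurlyeq P_{n}(1,2,n-4)$.

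Chaining the two steps produces $T_{n}(1,b|c,d)\prec P_{n}(1,b,n-b-2)\preccurlyeq P_{n}(1,2,n-4)$, and strictness of the first link delivers the conclusion $T_{n}(1,b|c,d)\prec P_{n}(1,2,n-4)$. The only delicate point is the choice of branch vertex in Step 1: if one collapsed the pair $(1,b)$ at $u$ rather than $(c,d)$ at $v$, the resulting starlike tree $P_{n}(c,d,n-c-d-1)$ need not be dominated by $P_{n}(1,2,n-4)$ (for instance, taking $(c,d)=(2,2)$ lands on $P_{n}(2,2,n-5)\succ P_{n}(1,2,n-4)$), so it is essential to push the $(c,d)$ pair and exploit the short pendent $P_1$ at the opposite branch vertex.
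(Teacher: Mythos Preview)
Your proof is correct and follows essentially the same route as the paper's: a total edge-grafting at the branch vertex carrying $P_c,P_d$ to obtain $T\prec P_n(1,b,n-2-b)$, followed by a second edge-grafting at the centre to reach $P_n(1,b,n-2-b)\preccurlyeq P_n(1,2,n-4)$. Your case analysis in Step~2 and the closing remark about why one must collapse the $(c,d)$ pair rather than the $(1,b)$ pair are useful elaborations, but the underlying argument is the same two-step application of Lemma~\ref{samegraft} that the paper gives.
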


\begin{proof} By using total edge grafting on the two pendent paths of lengths $c$ and $d$, we have $T\prec P_n(1,b,n-2-b)$. Using the edge grafting operation again, we have $P_n(1,b,n-2-b)\preceq P_n(1,2,n-4)$. Thus the result follows.
\end{proof}

\vskip 0.2cm

\begin{figure}[h]
\vspace{-0.5cm}
\begin{center}
\begin{tikzpicture}[scale=0.8, yshift=2cm]
\tikzstyle myline=[line width=0.8pt]
\coordinate (A1) at (0, 0.2);
\coordinate (B1) at (0.6, 0.5);
\coordinate (C1) at (1.2, 0.8);
\coordinate (D1) at (1.8, 1.1);
\node[rotate=27] at (0.95, 0.65) {$\cdots$};
\draw (A1)--(B1);
\draw (C1)--(D1);
\draw [decoration={brace, raise=2.5pt},  decorate] (B1) -- (D1)node [midway,  above=7pt]{$P_{a}$};
\coordinate (A2) at (0, -0.2);
\coordinate (B2) at (0.6, -0.5);
\coordinate (C2) at (1.2, -0.8);
\coordinate (D2) at (1.8, -1.1);
\node[rotate=-27] at (0.95, -0.65) {$\cdots$};
\draw (A2)--(B2);
\draw (C2)--(D2);
\draw [decoration={brace, raise=2.5pt},  decorate] (B2) -- (D2)node [midway,  above=7pt]{$P_{b}$};
\foreach \point in {A1, B1, C1, D1, A2, B2, C2, D2}
{\fill [black] (\point) circle (1.5pt);}
\draw (-0.7, 0) circle (30pt);

\node[left] at (A1) {$u$};
\node[left] at (A2) {$v$};
\node at (-1, 0) {$G$};
\end{tikzpicture}
\end{center}
    \vspace*{-0.5cm}\caption{$G_{u, v}(a, b)$}\label{figdif}
\end{figure}
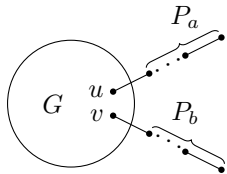

The following Lemma generalizes Lemma \ref{samegraft}, and is called ``edge grafting operation at different vertices''.

\begin{lem}\cite{shangraftUOB}\label{difgrift}
Let $u, v$ be two vertices of a unicyclic or bipartite graph $G$. Let $G_{u, v}(a, b)$ be the graph obtained from $G$ by attaching a pendent path of length $a$ to $u$ and attaching a pendent path of length $b$ to $v$ (as shown in Fig.\ref{figdif}). Suppose that $G$ satisfies:

(i). $G_{u, v}(0, 2)\succ G_{u, v}(1, 1)$.

(ii). For any nonnegative integers
{$p, q$},
$G_{u, v}(p, q)=G_{u, v}(q, p)$.

\noindent Let $a, b, c, d$  be nonnegative integers with $a\le b,  \
c\le d,  \ a+b=c+d$,  and $a <c$,  then we have

(1) If $a$ is even,  then $G_{u, v}(a, b)\succ G_{u, v}(c, d)$.

(2) If $a$ is odd,  then $G_{u, v}(a, b)\prec G_{u, v}(c, d)$.
\end{lem}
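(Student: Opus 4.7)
The plan is to prove both parts of the lemma simultaneously by strong induction on the total $s = a + b = c + d$. Condition (ii) lets me normalize so that $a \le b$ and $c \le d$ (as in the hypothesis), and combined with $a < c \le d$ and $a + b = c + d$ this forces $a < b$ strictly. For the base case $s = 2$, the only admissible configuration is $(a, b, c, d) = (0, 2, 1, 1)$ with $a = 0$ even, and the required relation $G_{u,v}(0, 2) \succ G_{u,v}(1, 1)$ is exactly assumption (i).

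For the inductive step with $s \ge 3$, I apply the recurrence (\ref{equ2.1}) of Theorem \ref{thm2.2} to the pendent cut edge at the end of the path of length $b$ attached at $v$ (and likewise for $(c, d)$ at its pendent edge at $v$):
$$\widetilde{\phi}(G_{u,v}(a, b), x) - \widetilde{\phi}(G_{u,v}(c, d), x) = x\,\Delta_1(x) + \Delta_2(x),$$
where $\Delta_j(x) = \widetilde{\phi}(G_{u,v}(a, b - j), x) - \widetilde{\phi}(G_{u,v}(c, d - j), x)$ for $j = 1, 2$. Each sub-comparison has the same first coordinates $a$ and $c$ as the original one, so the parity of $a$ (which governs the direction of the lemma) is unchanged. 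Provided the sub-tuples $(a, b - j)$ and $(c, d - j)$ still satisfy the lemma's hypothesis (in particular $c \le d - j$), the inductive hypothesis at total $s - j$ yields $\Delta_1, \Delta_2 \succcurlyeq 0$ in case (1) and $\Delta_1, \Delta_2 \preccurlyeq 0$ in case (2); since $x \ge 0$, the sign of $x\Delta_1 + \Delta_2$ matches and the inductive step is complete.

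The hypothesis $c \le d - j$ can fail only in the boundary regime $d - c \le j - 1$, i.e., when $d = c$ (for $j = 1, 2$) or $d = c + 1$ (for $j = 2$). In each such sub-case condition (ii) renormalizes $(c, d - j)$ to $(d - j, c)$; one then checks by direct case analysis that either this renormalization makes the sub-graphs coincide with $(a, b - j)$ (so $\Delta_j \equiv 0$ harmlessly), or the renormalized sub-comparison still falls under the inductive hypothesis with $\Delta_j$ of exactly the sign required. The pleasant fact that makes this work is that renormalization only forces the "smaller first coordinate" to change when $d - j < a$, which compels $d = c = a + 1$ and $j = 2$; then $d - j = a - 1$ has opposite parity from $a$, so the \emph{opposite} case of the inductive hypothesis applies to the sub-comparison and produces precisely the sign of $\Delta_2$ needed so that $\Delta = \Delta_2$ has the desired sign. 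The main obstacle is this boundary-case bookkeeping, keeping track of how renormalizations via (ii) interact with the parity flip in the lemma's two cases; a secondary issue is upgrading $\preccurlyeq$ to $\prec$, which requires identifying at least one sub-difference that is strictly nonzero and tracing it back through the induction to the strictness in assumption (i).
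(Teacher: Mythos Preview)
The paper does not prove this lemma at all: it is quoted verbatim from reference~\cite{shangraftUOB}, so there is no in-paper argument to compare your proposal against. Your induction on $s=a+b$ via the pendent-edge recurrence is correct for bipartite $G$ (the only case this paper actually uses), and is the natural approach; the boundary case $d=c=a+1$, $j=2$ is handled exactly as you describe, and strictness survives because $\Delta_j=0$ forces $d-j=a$, which cannot hold simultaneously for $j=1$ and $j=2$.

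Two small technical remarks. First, your citation of Theorem~\ref{thm2.2} is slightly off: if you take the pendent edge of $G_{u,v}(a,1)$ as the base cut edge, Theorem~\ref{thm2.2} yields the recurrence only for path-lengths $\ge 3$, whereas your induction sometimes needs it at $d=2$ (e.g.\ when $s=3$ or $4$). The $m=2$ instance $\widetilde{\phi}(G_{u,v}(a,2))=x\,\widetilde{\phi}(G_{u,v}(a,1))+\widetilde{\phi}(G_{u,v}(a,0))$ is of course still true, but it comes directly from Lemma~\ref{lem2.1} applied to the pendent edge together with (\ref{equ1.4}), not from Theorem~\ref{thm2.2} as stated. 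Second, the lemma is asserted for unicyclic \emph{or} bipartite $G$; your route through $\widetilde{\phi}$ and (\ref{equ1.4}) covers only the bipartite case, so for the full statement one would need the analogous coefficient recurrence in the unicyclic setting as in \cite{shangraftUOB}. For the purposes of the present paper this is irrelevant, since the lemma is applied only to trees.
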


\begin{figure}[h]
\begin{center}
\begin{tikzpicture}[scale=0.9]
\tikzstyle myline=[line width=0.8pt]
\begin{scope}[xshift=-6cm]
\coordinate (A) at (-1.2, 0);
\coordinate (B) at (0, 0);
\coordinate (C) at (0, 0);
\coordinate (D) at (1.2, 0);

\coordinate (E2) at (1.6, 0.4);
\coordinate (F2) at (2, 0.8);
\coordinate (E3) at (1.6, -0.4);
\coordinate (F3) at (2, -0.8);
\coordinate (E4) at (-1.6, -0.4);
\coordinate (F4) at (-2, -0.8);

\coordinate (E1) at (-1.6, 0.4);
\coordinate (F1) at (-2, 0.8);
\coordinate (G) at (-2.4, 1.2);

\draw (G)--(F1);

\draw (A)--(B);
\draw (C)--(D);
\draw (A)--(E1);
\draw (A)--(E4);
\draw (D)--(E2);
\draw (D)--(E3);

\draw (E2)--(F2);
\draw (E3)--(F3);
\draw (E4)--(F4);
\node[above] at (0.8, 0) {$e_{2}$};
\node[above] at (-1.22, 0.1) {$e_{1}$};
\draw (G) -- (E1);

\foreach \point in {A, B, C, D, E1, F1, G, E2, F2, E3, F3, E4, F4}
{\fill [black] (\point) circle (1.5pt);}
\end{scope}

\begin{scope}[xshift=2cm]
\coordinate (A) at (-1.2, 0);
\coordinate (B) at (-0.4, 0);
\coordinate (C) at (0.4, 0);
\coordinate (D) at (1.2, 0);
\coordinate (E1) at (-1.6, 0.4);
\coordinate (F1) at (-2, 0.8);
\coordinate (E2) at (1.6, 0.4);
\coordinate (F2) at (2, 0.8);
\coordinate (E3) at (1.6, -0.4);
\coordinate (F3) at (2, -0.8);
\coordinate (E4) at (-1.6, -0.4);
\coordinate (F4) at (-2, -0.8);
\draw (A)--(B)--(C)--(D);
\draw (A)--(E1);
\draw (A)--(E4);
\draw (D)--(E2);
\draw (D)--(E3);
\draw (E1)--(F1);
\draw (E2)--(F2);
\draw (E3)--(F3);
\draw (E4)--(F4);

\node [above] at (-0.8, 0.08) {$e_{1}'$};
\node [above] at (0, 0.08) {$e_{2}'$};

\foreach \point in {A, B, C, D, E1, F1, E2, F2, E3, F3, E4, F4}
{\fill [black] (\point) circle (1.5pt);}
\end{scope}
\end{tikzpicture}
\end{center}
  \vspace*{-0.5cm}\caption{ $T_{12}(3, 2|2, 2)$ and $T_{12}(2, 2|2, 2)$}\label{figsubcase3.2}
\end{figure}
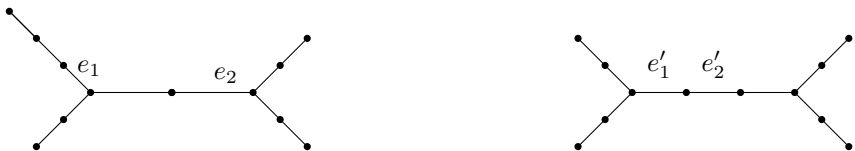

Now we use the methods given in \S 3 to prove the following two lemmas, which consider the tree $T_{n}(a, 2|2, 2)$ in two cases $3 \leq  a  \leq n-9$ and $a=n-8$. These two lemmas will only be used in the proof of the  Theorem \ref{thmtabcd} later.

\begin{lem}\label{lem4.3}
Let $3 \leq  a  \leq n-9$. Then   $T_{n}(a, 2|2, 2) \prec T_{n}(2, 2|2, 2)$.
\end{lem}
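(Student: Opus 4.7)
The plan is to realize both $T_n(a,2|2,2)$ and $T_n(2,2|2,2)$ simultaneously as outputs of a common two-parameter subdivision of a pair of small base trees, and then to propagate the desired strict quasi-order relation from four small base comparisons to the full range $3\le a\le n-9$ via the subdivision machinery; concretely I will apply Theorem~\ref{thm2.4} (the quasi-order precursor of the energy-comparison tools of Section~3).

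Concretely, take $G=T_{12}(3,2|2,2)$ and $H=T_{12}(2,2|2,2)$, both of order $12$. In $G$ mark two cut edges: $e_1$, the last edge of the length-$3$ pendent path at one of the two degree-$3$ vertices, and $e_2$, any edge of the central path of $G$ (which has $3$ vertices and $2$ edges). In $H$ mark $e_1',e_2'$ to be two distinct edges of its central path (which has $4$ vertices and $3$ edges). Unwinding the subdivision definition, one checks that $G(l,k)=T_{12+l+k}(3+l,2|2,2)$ --- the subdivision of $e_1$ lengthens the long pendent path while the subdivision of $e_2$ lengthens the central path --- whereas $H(l,k)=T_{12+l+k}(2,2|2,2)$, since both of its marked subdivisions merely lengthen the central path. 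Writing $a=3+l$ and $n=12+l+k$ with $l,k\ge 0$ then parametrizes exactly the regime $3\le a\le n-9$ of the hypothesis, and the lemma becomes the assertion $G(l,k)\prec H(l,k)$ for every such $(l,k)$.

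By Theorem~\ref{thm2.4}, once the four base comparisons $G(l,k)\preccurlyeq H(l,k)$ for $(l,k)\in\{(0,0),(0,1),(1,0),(1,1)\}$ are verified with at least one strict inequality in each of the two pairs~(\ref{equ2.2}) and~(\ref{equ2.3}), the conclusion $G(l,k)\prec H(l,k)$ follows automatically for all $(l,k)$ with $\max(l,k)\ge 2$. The four corner cases themselves are the small comparisons $T_m(a',2|2,2)\prec T_m(2,2|2,2)$ for $(m,a')\in\{(12,3),(13,3),(13,4),(14,4)\}$; they must be handled by the very same direct computation.

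The main obstacle is therefore this direct computation. For each of the at most seven distinct trees of orders $12$, $13$, $14$ that appear in the four pairs, I would compute $\widetilde{\phi}(T)$ --- for instance via Sachs' formula $b_{2i}(T)=m(T,i)$, or equivalently by iterating Lemma~\ref{lem2.1} --- and tabulate the matching numbers to confirm, in each pair, that every coefficient on the $G$-side is bounded above by its $H$-counterpart with strict inequality for at least one index. The delicate point is securing a strict inequality in each of~(\ref{equ2.2}) and~(\ref{equ2.3}); if any base pair were to collapse to $\sim$ on both occasions, Theorem~\ref{thm2.4} would only propagate the weaker $\preccurlyeq$ in the induction, and the strict form of the lemma would remain unproven.
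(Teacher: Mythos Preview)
Your proposal is correct and is essentially identical to the paper's own proof: the paper takes the same base graphs $G=T_{12}(3,2|2,2)$ and $H=T_{12}(2,2|2,2)$, the same choice of subdivision edges (one on the long pendent path and one on the central path of $G$; two on the central path of $H$), the same parametrization $T_n(a,2|2,2)=G(a-3,n-9-a)$ and $T_n(2,2|2,2)=H(a-3,n-9-a)$, and then applies Theorem~\ref{thm2.4} after explicitly computing the seven $\widetilde{\phi}$-polynomials to verify that all four base comparisons are in fact strict.
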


\begin{proof}
Let $e_{1},  e_{2}$ be the cut edges of  $G=T_{12}(3, 2|2, 2)$  and $e'_{1}$, $e'_{2}$ be the cut edges of  $H=T_{12}(2, 2|2, 2)$   as shown in Fig.\ref{figsubcase3.2}. respectively.
 Then we have $T_{n}(a, 2|2, 2)= G(a-3,n-9-a) $ and $ T_{n}(2, 2|2, 2)= H(a-3,n-9-a)$.

By some directly calculations,  we have
$$\begin{aligned}
\widetilde{\phi}(H(0,0), x)=&\widetilde{\phi}(T_{12}(2, 2|2, 2), x)={x}^{12}+11\, {x}^{10}+43\, {x}^{8}+74\, {x}^{6}+59\, {x}^{4}+19\, {x}^{2}+1, \\
\widetilde{\phi}(G(0,0), x)=&\widetilde{\phi}(T_{12}(3, 2|2, 2), x)={x}^{12}+11\, {x}^{10}+43\, {x}^{8}+74\, {x}^{6}+57\, {x}^{4}+17\, {x}^{2}, \\
\widetilde{\phi}(H(1,0), x)=\widetilde{\phi}(H(0,1), x)=&\widetilde{\phi}(T_{13}(2, 2|2, 2), x)={x}^{13}+12\, {x}^{11}+53\, {x}^{9}+108\, {x}^{7}+107\, {x}^{5}+48\, {x}^{3}+7\, x, \\
\widetilde{\phi}(G(1,0), x)=&\widetilde{\phi}(T_{13}(4, 2|2, 2), x)={x}^{13}+12\, {x}^{11}+53\, {x}^{9}+108\, {x}^{7}+105\, {x}^{5}+46\, {x}^{3}+7\, x, \\
\widetilde{\phi}(G(0,1), x)=&\widetilde{\phi}(T_{13}(3, 2|2, 2), x)={x}^{13}+12\, {x}^{11}+53\, {x}^{9}+108\, {x}^{7}+106\, {x}^{5}+46\, {x}^{3}+6\, x, \\
\widetilde{\phi}(H(1,1), x)=&\widetilde{\phi}(T_{14}(2, 2|2, 2), x)={x}^{14}+13\, {x}^{12}+64\, {x}^{10}+151\, {x}^{8}+181\, {x}^{6}+107\, {x}^{4}+26\, {x}^{2}+1, \\
\widetilde{\phi}(G(1,1), x)=&\widetilde{\phi}(T_{14}(4, 2|2, 2), x)={x}^{14}+13\, {x}^{12}+64\, {x}^{10}+151\, {x}^{8}+180\, {x}^{6}+105\, {x}^{4}+25\, {x}^{2}+1.\\
\end{aligned}$$
By comparing the coefficients of above polynomials,  we find that
$$G(0, 0)\prec  H(0, 0), \ G(0, 1)\prec  H(0, 1), \  G(1, 0)\prec  H(1, 0), \ G(1, 1)\prec  H(1, 1).$$
So by Theorem \ref{thm2.4} we have $T_{n}(a, 2|2, 2)=G(a-3,n-9-a)\prec H(a-3,n-9-a)=T_{n}(2, 2|2, 2)$.
\end{proof}

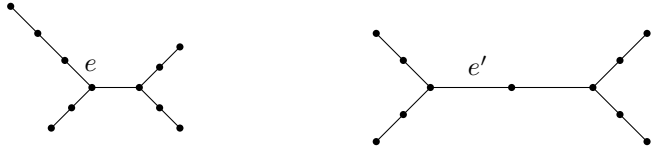
\begin{figure}[h]
\begin{center}
\begin{tikzpicture}[scale=0.9]
\tikzstyle myline=[line width=0.8pt]
\begin{scope}[xshift=1cm]
\tikzstyle myline=[line width=0.8pt]
\coordinate (A) at (-1.2, 0);
\coordinate (B) at (-0.5, 0);
\coordinate (E) at (-1.6, 0.4);
\coordinate (F) at (-1.5, -0.3);
\coordinate (G) at (-2.4, 1.2);
\coordinate (F3) at (-2, 0.8);

\coordinate (E1) at (-0.2, 0.3);
\coordinate (F1) at (-0.2, -0.3);
\coordinate (E2) at (0.1, 0.6);
\coordinate (F2) at (0.1, -0.6);

\coordinate (E3) at (-1.8, -0.6);
\draw (E3)--(F)--(A)--(B)--(E1)--(E2);
\draw (B)--(F1)--(F2);
\draw (A)--(E);
\draw (E)--(F3)--(G);
\node[above] at (-1.22, 0.1) {$e$};

\foreach \point in {A, B, E, F, G, E1, F1, E2, F2, E3, F3}
{\fill [black] (\point) circle (1.5pt);}
\end{scope}

\begin{scope}[xshift=6cm]
\tikzstyle myline=[line width=0.8pt]
\coordinate (A) at (-1.2, 0);
\coordinate (B) at (0, 0);
\coordinate (D) at (1.2, 0);
\coordinate (E1) at (-1.6, 0.4);
\coordinate (F1) at (-2, 0.8);
\coordinate (E2) at (1.6, 0.4);
\coordinate (F2) at (2, 0.8);
\coordinate (E3) at (1.6, -0.4);
\coordinate (F3) at (2, -0.8);
\coordinate (E4) at (-1.6, -0.4);
\coordinate (F4) at (-2, -0.8);
\draw (A)--(B)--(D);
\draw (A)--(E1);
\draw (A)--(E4);
\draw (D)--(E2);
\draw (D)--(E3);
\draw (E1)--(F1);
\draw (E2)--(F2);
\draw (E3)--(F3);
\draw (E4)--(F4);
\node [above] at (-0.5, 0.04) {$e'$};
\foreach \point in {A, B, D, E1, F1, E2, F2, E3, F3, E4, F4}
{\fill [black] (\point) circle (1.5pt);}
\end{scope}

\end{tikzpicture}
\end{center}
\vspace{-0.5cm}
  \caption{$G=T_{11}(3, 2|2, 2)$ and $H=T_{11}(2, 2|2, 2)$}\label{figsubcase3.1}
\end{figure}

Now we consider the remaining case $a=n-8$ for the trees of the form $T_{n}(a, 2|2, 2)$.

\begin{lem}\label{lem4.4}
$\mathbb{E}(T_{n}(n-8, 2|2, 2)) < \mathbb{E}({ T_{n}(2, 2|2, 2)})$ for all $n\ge 11$.
\end{lem}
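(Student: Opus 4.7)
The strategy is to recast both trees as members of a common $k$-subdivision family so that the new method of Section~3 becomes applicable. Take $G = T_{11}(3,2|2,2)$ with the cut edge $e$ chosen on its pendent path of length $3$, and $H = T_{11}(2,2|2,2)$ with the cut edge $e'$ chosen as one of the two internal edges between its two degree-$3$ vertices, exactly as shown in Fig.~\ref{figsubcase3.1}. Each subdivision of $e$ extends the long pendent path of $G(k)$ by one, while each subdivision of $e'$ extends the central path of $H(k)$ by one. Setting $k = n - 11 \ge 0$, this yields
$$G(k) = T_n(n-8,\,2|2,\,2), \qquad H(k) = T_n(2,\,2|2,\,2),$$
so the lemma reduces to proving $\mathbb{E}(H(k)) > \mathbb{E}(G(k))$ for every $k \ge 0$.

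Next I would compute the four polynomials $g_0, g_1, h_0, h_1$ (of degrees $11$ and $12$) by direct calculation, in the same style as in the proofs of Lemma~\ref{lem4.3} and Theorem~\ref{thmhuo}. The critical object is the auxiliary polynomial $\Delta(x) := h_1(x)\,g_0(x) - h_0(x)\,g_1(x)$, whose sign on $(0,\infty)$ decides whether $d_1 > d_0$ or $d_1 < d_0$ there. Should $\Delta(x)$ have a constant sign on $(0,\infty)$, the lemma would follow immediately from the matching part of Theorem~\ref{thm3.1}, combined with a numerical check that the baseline difference $\mathbb{E}(H(j)) - \mathbb{E}(G(j))$ (for $j = 0$ or $j = 1$, according to the sign) is strictly positive.

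The main obstacle I expect is that $\Delta(x)$ will \emph{not} keep a constant sign on $(0,\infty)$: extending a pendent arm and extending the central path are structurally very different perturbations, and there is no a priori reason for $d_1(x) - d_0(x)$ to retain a constant sign. In that situation Theorem~\ref{thm3.1} alone is insufficient, and I would fall back on Theorem~\ref{engergycompare}. After locating the set $D^C = \{x > 0 : d_1(x) \ge d_0(x)\}$ from the positive real roots of $\Delta$, I would apply (\ref{equ3.2}) in the form
$$\mathbb{E}(H(k)) - \mathbb{E}(G(k)) \;\ge\; \mathbb{E}(H(1)) - \mathbb{E}(G(1)) \;-\; \frac{2}{\pi}\int_{D^C}\ln\frac{d_1(x)}{d_0(x)}\,\mathrm{d}x,$$
valid uniformly in $k$, and then numerically bound the correction integral on $D^C$ and show that it is strictly dominated by the numerically computed value of $\mathbb{E}(H(1)) - \mathbb{E}(G(1))$. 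This final estimate is the crux of the argument: everything else is routine computation with the polynomials $g_j$ and $h_j$, but obtaining a sharp enough bound on the correction integral over $D^C$ to preserve strict positivity is where the proof is most delicate, and will depend on the precise factorization of $\Delta$ and the behavior of $\ln(d_1/d_0)$ there.
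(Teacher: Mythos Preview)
Your proposal is correct and essentially matches the paper's proof: the same base pair $G=T_{11}(3,2|2,2)$, $H=T_{11}(2,2|2,2)$ with the same cut edges, the correct anticipation that $h_1g_0-h_0g_1$ changes sign on $(0,\infty)$ (the paper factors it as $x(x-1)(x+1)(x^6+7x^4+11x^2+1)(x^2+1)^3$, so $D=(0,1)$), and the same recourse to Theorem~\ref{engergycompare} for the uniform lower bound. The only cosmetic difference is that the paper evaluates the bound via the equivalent form~(\ref{equ3.3}) using $\mathbb{E}(H(0))-\mathbb{E}(G(0))$ rather than your~(\ref{equ3.2}), obtaining the numerical value $\approx 0.005951>0$.
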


\begin{proof}
Consider the cut edges $e$ of $G=T_{11}(3, 2|2, 2)$ and $e'$ of $H=T_{11}(2, 2|2, 2)$ as shown in Fig.\ref{figsubcase3.1}.
Let $G(k)$, $H(k)$ be graphs obtained by subdividing  the cut edges $e$ of $G$ and $e'$ of $H$ respectively $k$ times.
 Then we have $T_{n}(n-8, 2|2,2)=G(n-11)$ and $T_{n}(2, 2, 2, 2)=H(n-11)$.
 Denote $g_{k}= \widetilde{\phi}(G(k), x)$ and $h_{k}= \widetilde{\phi}(H(k), x)$.

By some directly calculations,  we have
$$\begin{aligned}
h_{0}=&\widetilde{\phi}(T_{11}(2, 2|2, 2), x)={x}^{11}+10\, {x}^{9}+34\, {x}^{7}+48\, {x}^{5}+29\, {x}^{3}+6\, x,\\
g_{0}=&\widetilde{\phi}(T_{11}(3, 2|2, 2), x)={x}^{11}+10\, {x}^{9}+34\, {x}^{7}+49\, {x}^{5}+29\, {x}^{3}+5\, x,\\
h_{1}=&\widetilde{\phi}(T_{12}(2, 2|2, 2), x)={x}^{12}+11\, {x}^{10}+43\, {x}^{8}+74\, {x}^{6}+59\, {x}^{4}+19\, {x}^{2}+1,\\
g_{1}=&\widetilde{\phi}(T_{12}(4, 2|2, 2), x)={x}^{12}+11\, {x}^{10}+43\, {x}^{8}+75\, {x}^{6}+59\, {x}^{4}+18\, {x}^{2}+1.\\
\end{aligned}$$

\noindent So we have
$$ h_{1}g_{0}-h_{0}g_{1}=x(x-1)( x+1) ({x}^{6}+7\, {x}^{4}+11\, {x}^{2}+1)( {x}^{2}+1) ^{3}.$$
Thus
$${D}=\{x|h_{1}g_{0}- h_{0}g_{1} < 0,  x>0\} = (0,  1).$$
Also by using computer we can find:
$$\mathbb{E}(H(0))\doteq 13.059967,  \quad\quad \mathbb{E}(G(0))\doteq 13.015698$$
and by using computer to calculate the integral we can further obtain $$\displaystyle\mathbb{E}(H(0))-\mathbb{E}(G(0))+\frac{2}{\pi}\int\limits_{{D}}\ln\frac{d_{1}(x)}{d_{0}(x)}\text{d}x =\mathbb{E}(H)-\mathbb{E}(G)+\frac{2}{\pi}\int\limits_{{0}}^1\ln\frac{h_{1}g_{0}}{h_{0}g_{1}}\text{d}x \doteq 0.005951>0.$$
So using  Theorem \ref{engergycompare},  we obtain $\mathbb{E}(H(k))-\mathbb{E}(G(k))> 0$ for all $k \ge 0 $.
Thus $\mathbb{E}(T_{n}(n-8, 2|2, 2)) < \mathbb{E}({ T_{n}(2, 2|2, 2)})$.
\end{proof}

\begin{thm}\label{thmtabcd}
Let $n \geq 11$,  $a,b,c,d\ge 2$ and $a,b,c,d$ are not all equal to 2. Then we have $$\mathbb{E}({T_{n}(a, b|c, d)}) < \mathbb{E}({ T_{n}(2, 2|2, 2)}).$$
\end{thm}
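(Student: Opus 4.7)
The plan is to use the edge-grafting operations in Lemmas \ref{samegraft} and \ref{difgrift} to dominate $T_n(a,b|c,d)$ in quasi-order by some $T_n(2,2|2,e)$ with $e\ge 3$, and then conclude via the two base lemmas \ref{lem4.3} and \ref{lem4.4}.

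Assume without loss of generality that $a\le b$ and $c\le d$. If $a\ge 3$, I apply Lemma \ref{samegraft} at the left backbone end-vertex, comparing the pair $(a,b)$ with $(2,a+b-2)$: since $2<a$ and $2$ is even, this gives $T_n(a,b|c,d)\prec T_n(2,a+b-2|c,d)$. The analogous reduction on the right end yields $T_n(a,b|c,d)\preccurlyeq T_n(2,B|2,D)$, where $B=a+b-2$ if $a\ge 3$ and $B=b$ if $a=2$, and similarly for $D$. Since $(a,b,c,d)\ne(2,2,2,2)$ we have $(B,D)\ne(2,2)$, and after possibly swapping the two ends of the backbone I may assume $2\le B\le D$ with $D\ge 3$.

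When $B\ge 3$, I would apply Lemma \ref{difgrift} to the graph $G$ obtained from $T_n(2,B|2,D)$ by deleting the two longer pendent paths, taking $u,v$ to be the two backbone end-vertices. Then $G$ is itself a path of order $n-B-D$, and the reflection of this path swaps $u$ and $v$, so condition (ii) holds automatically. Condition (i) reads $G_{u,v}(0,2)\succ G_{u,v}(1,1)$, which in our setting is $P_N(N-5,2,2)\succ T_N(1,2|1,2)$ with $N=n-B-D+2$; I would verify this by chaining Theorem \ref{thm4.2} (which yields $T_N(1,2|1,2)\prec P_N(1,2,N-4)$) with a single application of Lemma \ref{samegraft} at the degree-$3$ vertex of $P_N(1,2,N-4)$, comparing the pair $(1,N-4)$ with $(2,N-5)$. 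Since $2<B$ and $2$ is even, Lemma \ref{difgrift} then gives $T_n(2,B|2,D)\prec T_n(2,2|2,B+D-2)$. If instead $B=2$, this step is vacuous since $T_n(2,B|2,D)$ is already in the target form.

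Setting $e=B+D-2$ when $B\ge 3$ and $e=D$ when $B=2$, we have $e\ge 3$ and, from the constraint $a+b+c+d\le n-2$, also $e\le n-8$. Thus Lemma \ref{lem4.3} (for $3\le e\le n-9$) or Lemma \ref{lem4.4} (for $e=n-8$) yields $\mathbb{E}(T_n(2,2|2,e))<\mathbb{E}(T_n(2,2|2,2))$. Combining this with the quasi-order chain and the monotonicity of energy under $\preccurlyeq$ (from Theorem \ref{thm1.1}) completes the proof. The step requiring the most care is the verification of condition (i) of Lemma \ref{difgrift}, as it is the only place where the concrete structure of the tree intervenes beyond the generic grafting framework; everything else is a routine reduction to the two base cases.
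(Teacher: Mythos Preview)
Your proposal is correct and follows essentially the same route as the paper: reduce via Lemma~\ref{samegraft} at each branch vertex to the form $T_n(2,\cdot\,|2,\cdot)$, then apply Lemma~\ref{difgrift} to push all excess length onto a single pendent path, arriving at $T_n(2,2|2,e)$ with $3\le e\le n-8$, and finish with Lemmas~\ref{lem4.3} and~\ref{lem4.4}. The paper carries out exactly this chain (writing the intermediate tree as $T_n(a+b-2,2|2,c+d-2)$ and the final one as $T_n(x,2|2,2)$ with $x=a+b+c+d-6$), but simply invokes Lemma~\ref{difgrift} without checking its hypotheses; your explicit verification of condition~(i) via Theorem~\ref{thm4.2} together with one more application of Lemma~\ref{samegraft} is a welcome addition, not a deviation.
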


\begin{proof}

By using the edge grafting operation in Lemma \ref{samegraft}, we have
$$T_{n}(a, b|c, d) \preccurlyeq T_{n}(a+b-2, 2|2, c+d-2).$$
By using Lemma \ref{difgrift} (edge grafting on different vertices), we also have
$$T_{n}(a+b-2, 2|2, c+d-2)\preccurlyeq T_{n}(a+b+c+d-6, 2|2, 2).$$
Write $x=a+b+c+d-6$, then we have $3\le x\le n-8$ since at least one of $a,b,c,d$ is greater than 2.

Now  If $3 \leq  x  \leq n-9$, then by Lemma \ref{lem4.3} we have $T_{n}(x, 2|2, 2) \prec T_{n}(2, 2|2, 2)$. So $\mathbb{E}({T_{n}(a, b|c, d)})\leq \mathbb{E}(T_{n}(x, 2|2, 2)) < \mathbb{E}({ T_{n}(2, 2|2, 2)})$.

If $x = n-8$,  then by Lemma \ref{lem4.4} we have $\mathbb{E}({T_{n}(a, b|c, d)})\leq \mathbb{E}(T_{n}(x, 2|2, 2)) < \mathbb{E}({ T_{n}(2, 2|2, 2)})$.
\end{proof}

\section{The trees of order $n$   with the first $\lfloor\frac{n-7}{2}\rfloor$ largest energies}

In this section,  we will determine  the first $\lfloor\frac{n-7}{2}\rfloor$  largest energy trees of order $n\ge 31$ by using the method of directly comparing energies given in \S 3.

First, we  divide the class of starlike trees into the following four subclasses:

\vskip 0.2cm

\noindent {\bf (C1).} The path $P_n$.

\vskip 0.2cm

\noindent {\bf (C2).} The class  $S_{n}= \{P_{n}(2, a, b) \ | \ a+b=n-3,  \  1\leq a \leq  b \}$.

\vskip 0.2cm

\noindent {\bf (C3).} The starlike trees $T$ of order $n$ with $\Delta (T)=3$ and $T\notin S_n$.

\vskip 0.2cm

\noindent {\bf (C4).} The starlike trees $T$ of order $n$ with $\Delta (T)\ge 4$.

\vskip 0.2cm

\noindent For convenience, we also define the following class (C5):

\vskip 0.2cm

\noindent {\bf (C5).} The class of non-starlike trees of order $n$ (i.e., $N_3(T)\ge 2$).

\vskip 0.2cm

\noindent It is obvious that the union of the classes (C1)-(C5) is the class of all the trees of order $n$.

\vskip 0.2cm

Now, our strategy of proving the main result is as follows. Firstly, using the quasi-order we can obtain (in Theorem \ref{thm5.1}) a total ordering of all the $\lfloor\frac{n-3}{2}\rfloor$ trees in $S_n$. Secondly, we can show (in Theorem \ref{thm5.2}) that the maximal tree (under the quasi-order) in the class (C3) is $P_n(4,4,*)$, and the maximal tree  in the class (C4) is $P_n(2,2,2,*)$. Next, by directly comparing the energies of the largest energy trees in the classes (C3) and (C4) with some smaller energy  graphs in $S_n$, and comparing the energies of the  tree $T_n(2,2|2,2)$ in the class (C5) with the smallest energy tree $P_n(2, 1,n-4)$ in $S_n$,  we obtain that the first $\lfloor\frac{n-9}{2}\rfloor$ largest energy trees in $S_n$ together with $P_n$ are the first $\lfloor\frac{n-7}{2}\rfloor$ largest energy trees in the class of all trees of order $n$.

\begin{thm}\label{thm5.1}
Let $S_{n}= \{P_{n}(2, a, b) \ | \ a+b=n-3,  \  1\leq a \leq  b\}$. Let $k=\lfloor\frac{n-3}{2}\rfloor$,  $t=\lfloor\frac{k}{2}\rfloor$ and $l=\lfloor\frac{k-1}{2}\rfloor$. Then we have the following totally quasi order for the trees in $S_n$:
\begin{equation}\label{equ5.1}
P_{n}(2, 2, *) \succ P_{n}(2, 4, *)  \succ \cdots \succ P_{n}(2, 2t, *)\succ P_{n}(2, 2l+1, *)\succ \cdots  \succ  P_{n}(2, 3, *) \succ  P_{n}(2, 1, *).
\end{equation}
\end{thm}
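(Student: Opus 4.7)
The plan is to realize every tree in $S_n$ as a graph of the form $G_u(a,b)$ from Lemma \ref{samegraft}. Specifically, take $G = P_3$ and let $u$ be one of its endpoints; then attaching pendent paths of lengths $a$ and $b$ to $u$ produces exactly $P_n(2,a,b)$ (the original pendent path of length $2$ in $G$ becomes the ``$2$'' in $P_n(2,a,b)$), with $a+b=n-3$. Since $G = P_3$ is bipartite and $u$ is not isolated, Lemma \ref{samegraft} applies to every pair of trees in $S_n$.

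The key observation I would use is that for any two trees $P_n(2,a_1,\ast)$ and $P_n(2,a_2,\ast)$ in $S_n$ with $a_1 < a_2$, Lemma \ref{samegraft} yields a comparison depending only on the parity of the \emph{smaller} value $a_1$: if $a_1$ is even then $P_n(2,a_1,\ast) \succ P_n(2,a_2,\ast)$, and if $a_1$ is odd then $P_n(2,a_1,\ast) \prec P_n(2,a_2,\ast)$, regardless of the parity of $a_2$. Applying this inside the set of even values $\{2,4,\ldots,2t\}$ gives
$$P_n(2,2,\ast) \succ P_n(2,4,\ast) \succ \cdots \succ P_n(2,2t,\ast),$$
and applying it inside the set of odd values $\{1,3,\ldots,2l+1\}$ gives
$$P_n(2,2l+1,\ast) \succ P_n(2,2l-1,\ast) \succ \cdots \succ P_n(2,3,\ast) \succ P_n(2,1,\ast).$$

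The one step that needs a little care is the ``bridge'' between the even chain and the odd chain, namely the comparison $P_n(2,2t,\ast) \succ P_n(2,2l+1,\ast)$. I would split according to the parity of $k = \lfloor (n-3)/2 \rfloor$. If $k$ is even, then $2t = k$ and $2l+1 = k-1 < 2t$, and since the smaller value $2l+1$ is odd, Lemma \ref{samegraft}(2) gives $P_n(2,2l+1,\ast) \prec P_n(2,2t,\ast)$. If $k$ is odd, then $2t = k-1 < k = 2l+1$, and since the smaller value $2t$ is even, Lemma \ref{samegraft}(1) gives $P_n(2,2t,\ast) \succ P_n(2,2l+1,\ast)$. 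In both cases the bridge holds, and concatenating the three pieces produces the total quasi-order (\ref{equ5.1}).

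There is no real obstacle here: the whole argument is a bookkeeping exercise built on repeated applications of Lemma \ref{samegraft} to the single underlying graph $G = P_3$. The only subtlety worth flagging is the case analysis in the parity of $k$ for the bridge step; everything else is immediate from the statement of the grafting lemma once one recognizes the ``smaller index controls the direction'' principle.
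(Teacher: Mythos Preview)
Your proposal is correct and follows exactly the approach the paper intends: the paper's proof is the single sentence ``The result follows directly from Lemma \ref{samegraft} by using the edge grafting operation,'' and you have simply spelled out the details, including the parity case analysis for the bridge step, that this sentence encapsulates.
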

\begin{proof}
The result follows directly from Lemma \ref{samegraft} by using the edge grafting operation.
\end{proof}

\begin{thm}\label{thm5.2} Let $n\ge 11$. Then we have

\noindent (1). If $T\in $(C3) and $T\ne P_n(4,4,n-9)$, then $T\prec P_n(4,4,n-9)$ .

\noindent (2). If $T\in $(C4) and $T\ne P_n(2,2,2,n-7)$, then $T\prec P_n(2,2,2,n-7)$ .

\end{thm}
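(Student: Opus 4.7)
The plan is to apply Lemma \ref{samegraft} iteratively at the unique vertex $u$ of maximum degree, pushing pairs of pendent-path lengths toward their extremal values. The backbone observation is that for a starlike tree with center $u$ and two legs of lengths $a,b$ summing to $s$ (with $a\le b$, $a,b\ge 1$), Lemma \ref{samegraft} identifies the unique quasi-order maximum: it is $(2,s-2)$ when $2$ is allowed and $s\ge 2$, and $(4,s-4)$ when $2$ is forbidden and $s\ge 8$ (with $(3,3)$, $(3,4)$, $(1,4)$, $(1,3)$ covering $s=6,7,5,4$).

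For part (1), let $T=P_n(a_1,a_2,a_3)\in$ (C3) with $a_1\le a_2\le a_3$, $a_i\ne 2$. The reduction is two-step. \emph{Step 1 (introduce a $4$-leg).} If no $a_i$ equals $4$, choose an appropriate pair of legs and apply Lemma \ref{samegraft} to replace it by a pair containing a $4$: for instance, $(1,a_j)\prec(4,a_j-3)$ for $a_j\ge 7$, $(3,a_j)\prec(4,a_j-1)$ for $a_j\ge 5$, or $(a_1,a_2)\prec(4,a_1+a_2-4)$ for $a_1\ge 5$, with fallback substitutions (such as $(1,6)\prec(3,4)$ or $(5,a_j)\prec(4,a_j+1)$) for the handful of small configurations where the primary substitution fails. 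Each substitution must be verified to avoid creating a $2$-leg, but a valid pair always exists when $n\ge 11$. \emph{Step 2 (optimize the remaining two legs).} With one leg fixed at $4$, Lemma \ref{samegraft} applied to the remaining pair (summing to $n-5$) yields $P_n(4,a_2,a_3)\preccurlyeq P_n(4,4,n-9)$, with equality exactly when $\{a_2,a_3\}=\{4,n-9\}$.

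For part (2), let $T=P_n(a_1,\ldots,a_k)\in$ (C4), so $k\ge 4$. \emph{Phase A (reduce $\Delta$ to $4$).} For $k\ge 5$, the total-edge-grafting case $a=0$ of Lemma \ref{samegraft} gives $G_u(0,c+d)\succ G_u(c,d)$, so any two legs $(a_{k-1},a_k)$ can be merged into a single leg of length $a_{k-1}+a_k$, yielding a starlike tree of degree $k-1$ with strictly larger quasi-order. Iterating this drops $\Delta$ to $4$. \emph{Phase B (push three legs to length $2$).} For a degree-$4$ starlike tree $P_n(b_1,b_2,b_3,b_4)$, iteratively apply Lemma \ref{samegraft} to a pair of legs to replace it by $(2,\text{sum}-2)$; this is strict unless the replaced leg already equals $2$, and the process terminates at $P_n(2,2,2,n-7)$. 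The only subtlety is when two legs both equal $1$: pairing them directly would give $(0,2)$ and drop the degree below $4$, so instead one pairs a $1$-leg with a longer leg $b_l\ge 3$ via $(1,b_l)\prec(2,b_l-1)$ to eliminate the $1$-legs first.

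I expect the main obstacle to be the case analysis in Step 1 of part (1): the forbidden intermediate class is $S_n$ (any leg of length $2$), so every grafting substitution must be checked against creating a $2$-leg, and several small exceptional configurations (such as $\{1,3,n-5\}$, $\{1,5,n-7\}$, $\{1,6,n-8\}$, $\{3,3,n-7\}$) have to be chained to $P_n(4,4,n-9)$ through short, individually verified sequences of quasi-order-increasing graftings. Part (2) is smoother since no forbidden intermediate class arises during either phase.
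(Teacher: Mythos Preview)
Your proposal is correct and uses the same tool (Lemma~\ref{samegraft}) as the paper, but for part~(1) the paper's argument is considerably shorter. The paper sidesteps your entire Step~1 case analysis by a single order-of-operations observation: with $a\le b\le c$ and $a+b+c=n-1\ge 10$ one has $b+c\ge 7$, so grafting the two \emph{largest} legs first gives $(b,c)\preccurlyeq(4,b+c-4)$ with $b+c-4\ge 3\ne 2$ automatically; then the remaining pair $(a,\,b+c-4)$ has neither entry equal to $2$, and a second application of Lemma~\ref{samegraft} yields $(a,\,b+c-4)\preccurlyeq(4,n-9)$ directly. Thus the ``forbidden $2$-leg'' obstruction you correctly identify (a $2$-leg in the intermediate tree would make the next grafting go the wrong way) is dissolved by choosing the right pair first, rather than by enumerating fallback substitutions for small configurations. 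For part~(2) the two arguments are essentially identical---total grafting to drop $\Delta$ to $4$, then at most three graftings to reach $P_n(2,2,2,n-7)$---and your explicit treatment of the $(1,1)$-pair simply fills in a detail the paper leaves to the reader.
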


\begin{proof} (1) Since $T\in $(C3), $T$ must be of the form $P_{n}(a, b, c)$ with $2\notin \{a,b,c\}$.
Without loss of generality,  we may assume that $a \leq b \leq c$. Then $b+c \geq 7$ since $n \ge 11$. So by Lemma \ref{samegraft} we have
$T=P_{n}(a, b, c) \preccurlyeq P_{n}(a, 4, b+c-4 )$  and $P_{n}(a, 4, b+c-4 ) \preccurlyeq P_{n}(4, 4, n-9)$ since $b+c-4 \neq 2$. Also $T\ne P_n(4,4,n-9)$ implies at least one of the above two relations is strict. Thus we have  $T=P_{n}(a, b, c) \prec P_{n}(4, 4, n-9)$.

(2) Since $\Delta(T) \ge 4$ for $T\in $(C4), by using  Lemma \ref{samegraft}
we can derive that  $T \preccurlyeq P_n(a, b, c, d)$ for some tree $P_n(a, b, c, d)$.  By further using the edge grafting operations at most 3 times on $P_n(a, b, c, d)$,  we will finally obtain
$P_n(a, b, c, d)\preccurlyeq P_{n}(2, 2, 2, n-7)$.
Also $T\ne P_n(2,2,2,n-7)$ implies at least one of the above relations is strict. Thus we have $T \prec P_{n}(2, 2, 2, n-7)$.
\end{proof}

The following Theorem \ref{thm5.3} and Theorem \ref{thm5.4} will exclude out $P_{n}(2,2,2,*)$ (the maximal energy tree in the class (C4)) and $T_{n}(2,2|2,2)$ (in  (C5)) by the smallest energy tree in $S_{n}$ by using the method of  directly comparing energies given in \S 3.

\begin{figure}[h]
\begin{center}
\begin{tikzpicture}[xshift=-3cm]
\tikzstyle myline=[line width=0.8pt]
\begin{scope}[xshift=0cm]
\coordinate (A) at (-1.2, 0);
\coordinate (B) at (-0.5, 0);
\coordinate (C) at (0.4, 0);
\coordinate (E1) at (-1.6, 0.4);
\coordinate (F1) at (-2, 0.8);
\coordinate (E4) at (-1.6, -0.4);
\coordinate (F4) at (-2, -0.8);
\coordinate (E3) at (-1.7, 0);
\coordinate (F3) at (-2.2, 0);
\draw (A)--(B)--(C);
\draw (A)--(E1);
\draw (A)--(E4);
\draw (E1)--(F1);
\draw (A)--(E3);
\draw (F3)--(E3);
\draw (E4)--(F4);
\node[above] at (-0.8, 0 ) {$e$};
\foreach \point in {A, B, C, E1, F1, E4, E3, F3, F4}
{\fill [black] (\point) circle (1.5pt);}
\end{scope}

\begin{scope}[xshift=6cm]
\tikzstyle myline=[line width=0.8pt]
\coordinate (A) at (-1.2, 0);
\coordinate (B) at (-0.5, 0);
\coordinate (C) at (0.5, 0);
\coordinate (D) at (1.2, 0);
\coordinate (E1) at (-1.6, 0.4);
\coordinate (F1) at (-2, 0.8);
\coordinate (E4) at (-1.5, -0.3);
\draw (A)--(B);
\draw (C)--(D);
\draw (A)--(E1);
\draw (A)--(E4);
\draw (E1)--(F1);
\node at (0, 0) {$\cdots$};
\node[above] at (-0.8, 0 ) {$e'$};
\draw [decoration={brace, raise=2.5pt},  decorate] (B) -- (D);
\node [right] at (0, 0.5) {$P_{5}$};
\foreach \point in {A, B, C, D, E1, F1, E4}
{\fill [black] (\point) circle (1.5pt);}
\end{scope}
\end{tikzpicture}
\end{center}
  \vspace*{-0.5cm}\caption{ $P_{9}(2, 2, 2, 2)$ and $P_{9}(2, 1, 5)$ }\label{figthm5.3}
\end{figure}
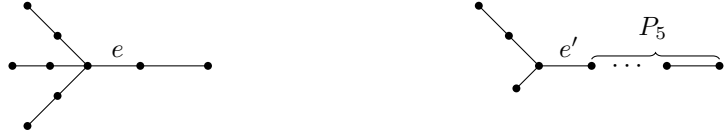

\begin{thm}\label{thm5.3}
Let $n \geq 10$. Then we have $\mathbb{E}(P_{n}(2, 2, 2, n-7))<\mathbb{E}(P_{n}(2, 1, n-4))$
\end{thm}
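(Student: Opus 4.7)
The plan is to use the machinery of Section 3 in direct parallel with the proof of Lemma \ref{lem4.4}. Let $G = P_{9}(2,2,2,2)$ with the cut edge $e$ indicated in Fig.\ref{figthm5.3}, and let $H = P_{9}(2,1,5)$ with the cut edge $e'$ lying on the long path. Subdividing $e$ in $G$ extends the pendent path of length $2$ to length $k+2$, so $G(k) = P_{9+k}(2,2,2,k+2)$; setting $k = n-9$ we get $G(k) = P_{n}(2,2,2,n-7)$. Similarly $H(k) = P_{9+k}(2,1,k+5) = P_{n}(2,1,n-4)$. Thus the theorem reduces to showing $\mathbb{E}(H(k)) > \mathbb{E}(G(k))$ for every $k \ge 1$.

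Write $g_{k} = \widetilde{\phi}(G(k),x)$, $h_{k} = \widetilde{\phi}(H(k),x)$, and $d_{k}(x) = h_{k}/g_{k}$. First I would compute the four base polynomials $g_{0}, h_{0}, g_{1}, h_{1}$ by using the standard recurrence for matching polynomials (or Sachs' formula, since these are forests), and then form the key quantity
\[
\Delta(x) \; = \; h_{1}(x)\,g_{0}(x) - h_{0}(x)\,g_{1}(x),
\]
which controls whether $d_{1}(x) > d_{0}(x)$ or $d_{1}(x) < d_{0}(x)$. Factoring $\Delta(x)$ (as was done in the proof of Lemma \ref{lem4.4}) will determine the set $D = \{x > 0 : d_{0}(x) > d_{1}(x)\}$.

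There are then two possible scenarios, and I would branch on the sign behaviour of $\Delta$. If $\Delta(x)$ has one sign for all $x>0$, then Theorem \ref{thm3.1} applies directly: I would compute $\mathbb{E}(H(0))$ and $\mathbb{E}(G(0))$ (or $\mathbb{E}(H(1))$ and $\mathbb{E}(G(1))$, depending on the sign) numerically and verify that the base-case difference is already positive, which immediately yields $\mathbb{E}(H(k)) > \mathbb{E}(G(k))$ for $k \ge 1$. If $\Delta(x)$ changes sign on $(0,\infty)$, then $D$ is a proper subset of $(0,\infty)$ and the unconditional bound in Theorem \ref{engergycompare} is needed: I would numerically evaluate
\[
\mathbb{E}(H(0)) - \mathbb{E}(G(0)) + \frac{2}{\pi}\int_{D}\ln\frac{d_{1}(x)}{d_{0}(x)}\,\mathrm{d}x
\]
and show that it is strictly positive, giving $\mathbb{E}(H(k)) - \mathbb{E}(G(k)) > 0$ for every $k \ge 0$ (in particular $k \ge 1$, i.e. $n \ge 10$).

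The main obstacle is the sign analysis of $\Delta(x)$: this polynomial has moderate degree and its factorization (or the explicit description of $D$) must be carried out carefully. A secondary obstacle, if we land in the mixed-sign case, is the rigorous numerical verification that the lower bound in \eqref{equ3.3} is strictly positive; as in Lemma \ref{lem4.4} this requires computer-assisted computation of both a finite-interval integral and the two base energies $\mathbb{E}(H(0))$, $\mathbb{E}(G(0))$, with enough precision that the positivity is unambiguous. Once these computations are in hand, the conclusion follows mechanically from Theorem \ref{thm3.1} or Theorem \ref{engergycompare}.
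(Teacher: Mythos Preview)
Your approach is exactly the paper's, and it works. Two points about the actual computation that you should be aware of.

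First, the sign analysis lands in the clean case: the paper computes
\[
h_{1}g_{0}-h_{0}g_{1}=(2x^{4}+8x^{2}+1)(x^{2}+1)^{3}>0\qquad(x>0),
\]
so $D=\emptyset$ and Theorem~\ref{thm3.1} applies directly; the Theorem~\ref{engergycompare} branch is never needed.

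Second, and this is the one place your plan could stumble: the base-case energy difference is not positive but \emph{exactly} zero. The paper observes $\mathbb{E}(H(0))=\mathbb{E}(G(0))=6+2\sqrt{5}$ (the two trees $P_{9}(2,1,5)$ and $P_{9}(2,2,2,2)$ are equienergetic, though not cospectral). A purely numerical check of the type you describe would return something on the order of machine precision and would not ``verify that the base-case difference is already positive''. What saves the argument is that Theorem~\ref{thm3.1}(1) gives a \emph{strict} inequality $\mathbb{E}(H(k))-\mathbb{E}(G(k))>\mathbb{E}(H(0))-\mathbb{E}(G(0))$ for every $k>0$, so the exact equality at $k=0$ still yields $\mathbb{E}(H(k))>\mathbb{E}(G(k))$ for $k\ge 1$, i.e.\ for $n\ge 10$. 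You should compute the base energies exactly (both characteristic polynomials factor enough to do this) rather than numerically.
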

\begin{proof}
Consider the cut edges $e$ of $G=P_{9}(2, 2, 2, 2)$ and $e'$ of $H=P_{9}(2, 1, 5)$ as shown in Fig.\ref{figthm5.3}.

Let $G(k)$, $H(k)$ be graphs obtained by subdividing  the cut edges $e$ of $G$ and $e'$ of $H$ respectively $k$ times.
Then we have $P_{n}(2,2,2,n-7)=G(n-9)$ and $P_{n}(2,1,n-4)=H(n-9)$.
Denote $g_{k}= \widetilde{\phi}(G(k), x)$ and $h_{k}= \widetilde{\phi}(H(k), x)$.

By some directly calculations,  we have
$$\begin{aligned}
h_{0}=&\widetilde{\phi}(P_{9}(2, 1, 5)  , x) ={{x}^{9}}+8{{x}^{7}}+20{{x}^{5}}+17{{x}^{3}}+4x,\\
g_{0}=&\widetilde{\phi}(P_{9}(2, 2, 2, 2), x) ={x}^{9}+8\, {x}^{7}+18\, {x}^{5}+16\, {x}^{3}+5\, x ,\\
\end{aligned}$$
$$\begin{aligned}
h_{1}=&\widetilde{\phi}(P_{10}(2, 1, 6)  , x) ={x}^{10}+9\, {x}^{8}+27\, {x}^{6}+31\, {x}^{4}+12\, {x}^{2}+1 ,\\
g_{1}=&\widetilde{\phi}(P_{10}(2, 2, 2, 3), x) = {x}^{10}+9\, {x}^{8}+25\, {x}^{6}+28\, {x}^{4}+12\, {x}^{2}+1.
\end{aligned}$$
So we have  $ \  \ h_{1}g_{0}-h_{0}g_{1}= (2\, {x}^{4}+8\, {x}^{2}+1) ({x}^{2}+1) ^{3}> 0$ for all $x >0$.

Also we can compute that $\mathbb{E}(H(0))=\mathbb{E}(G(0))=6+2\,  \sqrt{5}$. So using Theorem \ref{thm3.1},  we have\\[0.2cm]
\hspace*{1cm}$\mathbb{E}(P_{n}(2, 1, n-4))- \mathbb{E}(P_{n}(2, 2, 2, n-7))=\mathbb{E}(H(n-9))-\mathbb{E}(G(n-9)) > \mathbb{E}(H(0))-\mathbb{E}(G(0))=0.$
\end{proof}

Notice that $P_{n}(2, 2, 2, n-7)$ and $P_{n}(2, 1, n-4)$  are quasi-order incomparable when $n\ge 11$. So Theorem \ref{thm5.3}  can not be proven by only
using the quasi-order method.
\vspace{0.5cm}

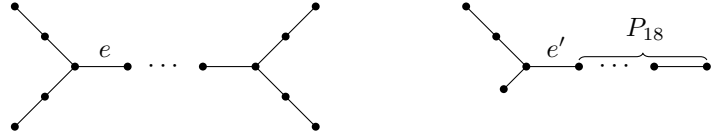
\begin{figure}[h]
\begin{center}
\begin{tikzpicture}[xshift=-3cm]
\tikzstyle myline=[line width=0.8pt]
\coordinate (A) at (-1.2, 0);
\coordinate (B) at (-0.5, 0);
\coordinate (C) at (0.5, 0);
\coordinate (D) at (1.2, 0);
\coordinate (E1) at (-1.6, 0.4);
\coordinate (F1) at (-2, 0.8);
\coordinate (E2) at (1.6, 0.4);
\coordinate (F2) at (2, 0.8);
\coordinate (E3) at (1.6, -0.4);
\coordinate (F3) at (2, -0.8);
\coordinate (E4) at (-1.6, -0.4);
\coordinate (F4) at (-2, -0.8);
\draw (A)--(B);
\draw (C)--(D);
\draw (A)--(E1);
\draw (A)--(E4);
\draw (D)--(E2);
\draw (D)--(E3);
\draw (E1)--(F1);
\draw (E2)--(F2);
\draw (E3)--(F3);
\draw (E4)--(F4);
\node at (0, 0) {$\cdots$};
\node[above] at (-0.8, 0 ) {$e$};
\foreach \point in {A, B, C, D, E1, F1, E2, F2, E3, F3, E4, F4}
{\fill [black] (\point) circle (1.5pt);}
\begin{scope}[xshift=6cm]
\tikzstyle myline=[line width=0.8pt]
\coordinate (A) at (-1.2, 0);
\coordinate (B) at (-0.5, 0);
\coordinate (C) at (0.5, 0);
\coordinate (D) at (1.2, 0);
\coordinate (E1) at (-1.6, 0.4);
\coordinate (F1) at (-2, 0.8);
\coordinate (E4) at (-1.5, -0.3);
\draw (A)--(B);
\draw (C)--(D);
\draw (A)--(E1);
\draw (A)--(E4);
\draw (E1)--(F1);
\node at (0, 0) {$\cdots$};
\node[above] at (-0.8, 0 ) {$e'$};
\draw [decoration={brace, raise=2.5pt},  decorate] (B) -- (D);
\node [right] at (0, 0.5) {$P_{18}$};
\foreach \point in {A, B, C, D, E1, F1, E4}
{\fill [black] (\point) circle (1.5pt);}
\node at (0, -1.2) {};
\end{scope}

\end{tikzpicture}
\end{center}
  \vspace*{-1cm}\caption{$T_{22}(2, 2|2, 2)$ and $P_{22}(2, 1, 18)$}\label{figthm5.4}
\end{figure}

\begin{thm}\label{thm5.4}
 Let $n \geq 22$. Then we have $\mathbb{E}(T_{n}(2, 2|2, 2))<\mathbb{E}(P_{n}(2, 1, n-4))$.
\end{thm}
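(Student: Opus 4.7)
The plan is to apply Theorem \ref{engergycompare} to the base pair $G = T_{22}(2,2|2,2)$ and $H = P_{22}(2,1,18)$, using the cut edges $e$ and $e'$ indicated in Fig.~\ref{figthm5.4}. These are chosen so that subdividing each edge $k := n-22$ times yields $G(k) = T_n(2,2|2,2)$ and $H(k) = P_n(2,1,n-4)$ respectively, reducing the problem to showing $\mathbb{E}(H(k)) > \mathbb{E}(G(k))$ for every $k \geq 0$.

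First I would compute the four polynomials $g_0, g_1, h_0, h_1$ explicitly (e.g., via the Heilbronner recurrence in Lemma \ref{lem2.1} or by direct enumeration of matchings, since the trees are forests) and then form and factor the cross product $h_1 g_0 - h_0 g_1$. By analogy with Lemma \ref{lem4.4}, where an essentially similar cross product changed sign at $x=1$, I expect $h_1 g_0 - h_0 g_1$ to change sign on $(0,\infty)$; this matches the fact that $T_n(2,2|2,2)$ and $P_n(2,1,n-4)$ are quasi-order incomparable, which is exactly the situation Theorem \ref{engergycompare} is designed to handle. Accordingly I would identify the set $D = \{x > 0 : d_0(x) > d_1(x)\}$ (expected to be a bounded interval near the origin) as the roots of this polynomial factor.

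Next, using the formula \eqref{equ3.3} in Theorem \ref{engergycompare}, it suffices to verify the explicit numerical inequality
$$\mathbb{E}(H(0)) - \mathbb{E}(G(0)) + \frac{2}{\pi}\int_{D}\ln\frac{d_1(x)}{d_0(x)}\,\text{d}x > 0.$$
Both energies $\mathbb{E}(P_{22}(2,1,18))$ and $\mathbb{E}(T_{22}(2,2|2,2))$ can be computed numerically from the characteristic polynomials; the correction integral, whose integrand is non-negative on $D$ by the definition of $D$, can be evaluated by numerical quadrature. Once positivity is confirmed, Theorem \ref{engergycompare} delivers $\mathbb{E}(H(k)) - \mathbb{E}(G(k)) > 0$ uniformly in $k \geq 0$, which is exactly the desired inequality for all $n \geq 22$.

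The main obstacle is the rather delicate numerical verification in the last step: because $P_n(2,1,n-4)$ is the \emph{smallest} energy tree in $S_n$ and $T_n(2,2|2,2)$ is a genuinely competitive non-starlike tree, the margin $\mathbb{E}(H(0)) - \mathbb{E}(G(0))$ is likely quite small (and could even be negative), so the integral correction term must do real work. The lower bound $n \geq 22$ in the statement strongly suggests that smaller starting graphs would not make the right-hand side of \eqref{equ3.3} positive — so the choice of $n = 22$ for the base graph is essentially forced, and demonstrating that this particular choice works is the computational heart of the proof.
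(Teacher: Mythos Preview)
Your plan is essentially identical to the paper's proof: same base graphs $G=T_{22}(2,2|2,2)$ and $H=P_{22}(2,1,18)$, same cut edges, same appeal to Theorem~\ref{engergycompare} via formula~\eqref{equ3.3}, and the same final numerical check (which in the paper comes out to roughly $0.000425>0$, with $D\approx(0,0.663)$). One small slip: on $D$ we have $d_0(x)>d_1(x)$, so the integrand $\ln\bigl(d_1(x)/d_0(x)\bigr)$ in \eqref{equ3.3} is \emph{negative}, not non-negative; the delicacy is therefore that the positive base margin $\mathbb{E}(H(0))-\mathbb{E}(G(0))\approx 0.0070$ must survive subtraction of this penalty, not that the correction rescues a possibly negative margin.
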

\begin{proof}
Consider the cut edges $e$ of $G=T_{22}(2, 2|2, 2)$ and $e'$ of $H=P_{22}(2, 1, 18)$ as shown in Fig.\ref{figthm5.4}.

Let $G(k)$, $H(k)$ be graphs obtained by subdividing  the cut edges $e$ of $G$ and $e'$ of $H$ respectively $k$ times.
Then we have $T_{n}(2,2|2,2)=G(n-22)$ and $P_{n}(2,1,n-4)=H(n-22)$.
Denote $g_{k}= \widetilde{\phi}(G(k), x)$ and $h_{k}= \widetilde{\phi}(H(k), x)$.

 By some directly calculations,  we have
$$\begin{aligned}
h_{0}={x}^{22}& +21\, {x}^{20}+189\, {x}^{18}+953\, {x}^{16}+2955\, {x}^{14}+5824\, {x}^{12}+7293\, {x}^{10}+5643\, {x}^{8}+2541\, {x}^{6}+595\, {x}^{4}+57\, {x}^{2}+1,\\
g_{0}={x}^{22}&+21\, {x}^{20}+188\, {x}^{18}+939\, {x}^{16}+2879\, {x}^{14}+5625\, {x}^{12}+7046\, {x}^{10}+5546\, {x}^{8}+2598\, {x}^{6}+644\, {x}^{4}+64\, {x}^{2}+1,\\
h_{1}={x}^{23}& +22\, {x}^{21}+209\, {x}^{19}+1123\, {x}^{17}+3756\, {x}^{15}+8113\, {x}^{13}+11375\, {x}^{11}+10153\, {x}^{9}+5511\, {x}^{7}+1672\, {x}^{5}\\
&+241\, {x}^{3}+11\, x,\\
g_{1}={x}^{23}& +22\, {x}^{21}+208\, {x}^{19}+1108\, {x}^{17}+3667\, {x}^{15}+7850\, {x}^{13}+10982\, {x}^{11}+9912\, {x}^{9}+5546\, {x}^{7}+1768\, {x}^{5}\\
&+268\, {x}^{3}+12\, x.
\end{aligned}$$
So we have \\
\hspace*{3cm}$\begin{aligned}
&h_{1}g_{0}-h_{0}g_{1}= x({x}^{8}+7\, {x}^{6}+11\, {x}^{4}-4\, {x}^{2}-1) ({x}^{2}+1) ^{3}\\
&D=\{x|h_{1}g_{0}- h_{0}g_{1} < 0,  x>0\}\doteq (0,  0.663073 ).
\end{aligned}$

\noindent By using computer we can also find\\
$\mathbb{E}(H(0))\doteq 27.182092$, \quad   $\mathbb{E}(G(0))\doteq 27.175139$, \ and \  $\displaystyle\mathbb{E}(H(0))-\mathbb{E}(G(0))+\frac{2}{\pi}\int\limits_{{D}}\ln(\frac{h_{1}g_{0}}{h_{0}g_{1}})\text{d}x \doteq 0.000425>0.$

\noindent So by using Theorem \ref{engergycompare},  we have $\mathbb{E}(P_{n}(2, 1, n-4))-\mathbb{E}(T_{n}(2, 2|2, 2))=\mathbb{E}(H(n-22))-\mathbb{E}(G(n-22)> 0$.
\end{proof}

\begin{figure}[h]
\begin{center}
\begin{tikzpicture}[xshift=-3cm]
\begin{scope}
\tikzstyle myline=[line width=0.8pt]
\coordinate (A) at (-1.2, 0);
\coordinate (B) at (-0.5, 0);
\coordinate (C) at (0.5, 0);
\coordinate (D) at (1.2, 0);
\coordinate (E1) at (-1.6, 0.4);
\coordinate (F1) at (-2, 0.8);
\coordinate (E4) at (-1.6, -0.4);
\coordinate (F4) at (-2, -0.8);
\coordinate (E3) at (-2.4, -1.2);
\coordinate (F3) at (-2.8, -1.6);
\coordinate (E5) at (-2.4, 1.2);
\coordinate (F5) at (-2.8, 1.6);
\draw (A)--(B);
\draw (C)--(D);
\draw (A)--(E1);
\draw (A)--(E4);
\draw (E1)--(F1)--(E5)--(F5);
\draw (E4)--(F4);
\draw (E3)--(F4);
\draw (E3)--(F3);
\node at (0, 0) {$\cdots$};
\node[above] at (-0.8, 0 ) {$e$};
\draw [decoration={brace, raise=2.5pt},  decorate] (B) -- (D);
\node [right] at (0, 0.5) {$P_{22}$};
\foreach \point in {A, B, C, D, E1, F1, E4, F4, E3, F5, E5, F4, F3}
{\fill [black] (\point) circle (1.5pt);}

\end{scope}
\begin{scope}[xshift=6cm]
\tikzstyle myline=[line width=0.8pt]
\coordinate (A) at (-1.2, 0);
\coordinate (B) at (-0.5, 0);
\coordinate (C) at (0.5, 0);
\coordinate (D) at (1.2, 0);
\coordinate (E1) at (-1.6, 0.4);
\coordinate (F1) at (-2, 0.8);
\coordinate (E4) at (-1.5, -0.3);
\coordinate (F4) at (-1.8, -0.6);
\coordinate (E3) at (-2.0, -0.8);
\coordinate (F3) at (-2.4, -1.2);
\draw (A)--(B);
\draw (C)--(D);
\draw (A)--(E1);
\draw (A)--(E4);
\draw (E1)--(F1);
\draw (E3)--(F3);
\node at (0, 0) {$\cdots$};
\node[rotate=-135] at (F4) {$\cdots$};
\node[above] at (-0.8, 0 ) {$e'$};
\draw [decoration={brace, raise=2.5pt},  decorate] (B) -- (D);
\node [right] at (0, 0.5) {$P_{21}$};
\draw [decoration={brace, raise=2.5pt},  decorate] (E4) -- (F3);
\node [right] at (-2, -1) {$P_{7}$};
\foreach \point in {A, B, C, D, E1, F1, E4, E3, F3}
{\fill [black] (\point) circle (1.5pt);}
\end{scope}
\end{tikzpicture}
\end{center}
  \vspace*{-0.5cm}\caption{{$P_{31}(4, 4, 22)$} and  {$P_{31}(2, 7, 21)$}}\label{figthm5.5}
\end{figure}
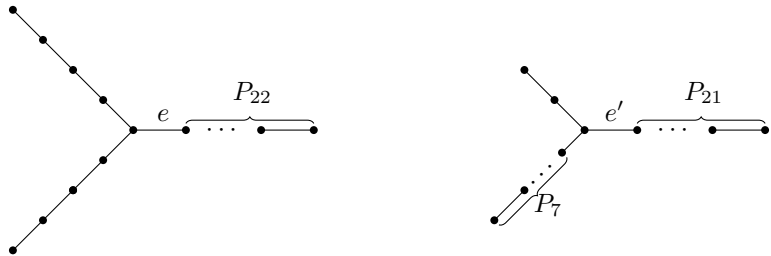

The following Theorem \ref{thm5.5} will exclude out the maximal energy tree in the class (C3) by the fourth smallest energy tree in $S_n$.

\begin{thm}\label{thm5.5}
Let $n \geq 31$. Then we have $\mathbb{E}(P_{n}(4, 4, n-9))<\mathbb{E}(P_{n}(2, 7, n-10)).$
\end{thm}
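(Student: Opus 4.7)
The plan is to apply the same $k$-subdivision machinery used in the proofs of Theorems \ref{thm5.3} and \ref{thm5.4}. I would take the base graphs indicated in Fig.\ref{figthm5.5}, namely $G = P_{31}(4,4,22)$ with the distinguished cut edge $e$ on its long pendent path of length $22$, and $H = P_{31}(2,7,21)$ with the distinguished cut edge $e'$ on its long pendent path of length $21$. After subdividing $e$ and $e'$ each $k = n-31$ times, one obtains $G(n-31) = P_n(4,4,n-9)$ and $H(n-31) = P_n(2,7,n-10)$, so the statement becomes $\mathbb{E}(H(n-31)) > \mathbb{E}(G(n-31))$ for all $n \ge 31$.

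The first step is routine but computational: evaluate the four polynomials $g_0 = \widetilde{\phi}(G(0),x)$, $g_1 = \widetilde{\phi}(G(1),x)$, $h_0 = \widetilde{\phi}(H(0),x)$, $h_1 = \widetilde{\phi}(H(1),x)$ explicitly (using any convenient expansion such as Lemma \ref{lem2.1} applied repeatedly along the pendent paths). With these in hand, form the key determinantal polynomial $\Delta(x) = h_1 g_0 - h_0 g_1$ and determine its sign on $(0,+\infty)$. I expect $\Delta(x)$ to change sign on $(0,+\infty)$, putting us in the situation of Theorem \ref{engergycompare} rather than the cleaner Theorem \ref{thm3.1}; this is the pattern already seen in Theorem \ref{thm5.4}.

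Assuming $\Delta(x)$ changes sign, I would identify (numerically, to high precision) the set $D = \{x > 0 \mid d_0(x) > d_1(x)\}$, which should be a union of finitely many bounded intervals, and then apply the bound from Theorem \ref{engergycompare} in the form
\[
\mathbb{E}(H(k)) - \mathbb{E}(G(k)) \;\geq\; \bigl(\mathbb{E}(H(0)) - \mathbb{E}(G(0))\bigr) + \frac{2}{\pi}\int_{D} \ln\frac{d_1(x)}{d_0(x)}\,\mathrm{d}x.
\]
To finish, I would compute $\mathbb{E}(H(0))$ and $\mathbb{E}(G(0))$ (numerically, from the base spectra) and numerically evaluate the correction integral over $D$, then verify that the sum is strictly positive. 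Since the inequality is independent of $k$, this gives the result uniformly for all $n \ge 31$.

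The main obstacle is quantitative rather than structural: the threshold $n \ge 31$ must appear precisely because the baseline gap $\mathbb{E}(H(0)) - \mathbb{E}(G(0))$ is extremely small relative to the negative correction contributed by the interval(s) in $D$. So the delicate step will be proving that the numerically observed positivity of the right-hand side above is genuine, which requires certified numerical bounds on both the roots delimiting $D$ and on the integrand $\ln(d_1/d_0)$ over $D$. A secondary issue is simply verifying that the base graphs, as drawn, do realize $P_n(4,4,n-9)$ and $P_n(2,7,n-10)$ under $k$-subdivision of $e$ and $e'$ respectively; this is immediate from the definition of subdivision on a pendent-path edge but needs a one-line check.
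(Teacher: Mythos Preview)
Your overall plan matches the paper's proof exactly---same base graphs $G=P_{31}(4,4,22)$ and $H=P_{31}(2,7,21)$, same subdivision edges, same reduction to comparing $\mathbb{E}(H(k))-\mathbb{E}(G(k))$ for $k\ge 0$. The one point where your expectation is off is the sign of $\Delta(x)=h_1g_0-h_0g_1$: it does \emph{not} change sign. In fact it factors as
\[
\Delta(x)=x\,(x^4+3x^2+1)\,(x^{12}+12x^{10}+53x^8+107x^6+99x^4+34x^2+1),
\]
which is strictly positive for all $x>0$. Hence $D=\emptyset$, the correction integral in Theorem~\ref{engergycompare} vanishes, and the cleaner Theorem~\ref{thm3.1} applies directly: $\mathbb{E}(H(k))-\mathbb{E}(G(k))\ge \mathbb{E}(H(0))-\mathbb{E}(G(0))$ for all $k\ge 0$. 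One then checks numerically that $\mathbb{E}(H(0))-\mathbb{E}(G(0))\approx 0.000181>0$, and the proof is complete. So the ``delicate step'' you worried about (certified bounds on $D$ and on the correction integrand) simply does not arise; the threshold $n\ge 31$ is there only to make the base-case difference $\mathbb{E}(H(0))-\mathbb{E}(G(0))$ positive, not to absorb any negative correction.
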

\begin{proof}
Consider the cut edges $e$ of $G=P_{31}(4, 4, 22)$ and $e'$ of $H=P_{31}(2, 7, 21)$ as shown in Fig.\ref{figthm5.5}.

Let $G(k)$, $H(k)$ be graphs obtained by subdividing the cut edges $e$ of $G$ and $e'$ of $H$ respectively $k$ times.
 Then we have $P_{n}(4,4,n-9)=G(n-31)$ and $P_{n}(2,7,n-10)=H(n-31)$.
 Denote $g_{k}= \widetilde{\phi}(G(k), x)$ and $h_{k}= \widetilde{\phi}(H(k), x)$.

By some directly calculations,  we have
$$\begin{aligned}
h_{0}=\widetilde{\phi}(P_{31}(2, 7, 21)  , x) ={x}^{31}&+30\, {x}^{29}+405\, {x}^{27}+3252\, {x}^{25}+17296\, {x}^{23}
+64220\, {x}^{21}+170943\, {x}^{19}+329768\, {x}^{17}\\+460696\, {x}^{15}
&+460851\, {x}^{13}+322620\, {x}^{11}+152131\, {x}^{9}+45426\, {x}^{7}+7738
\, {x}^{5}+619\, {x}^{3}+15\, x,\\
g_{0}=\widetilde{\phi}(P_{31}(4, 4, 22), x)
={x}^{31}&+30\, {x}^{29}+405\, {x}^{27}+3252\, {x}^{25}+17295\, {x}^{23}
+64200\, {x}^{21}+170772\, {x}^{19}+328952\, {x}^{17}\\+458317\, {x}^{15}
&+456496\, {x}^{13}+317681\, {x}^{11}+148864\, {x}^{9}+44349\, {x}^{7}+7644
\, {x}^{5}+636\, {x}^{3}+16\, x,\\
h_{1}=\widetilde{\phi}(P_{32}(2, 7, 22)  , x)
={x}^{32}&+31\, {x}^{30}+434\, {x}^{28}+3629\, {x}^{26}+20198\, {x}^{24}
+78938\, {x}^{22}+222724\, {x}^{20}+459365\, {x}^{18}\\+693530\, {x}^{16}
+&760145\, {x}^{14}+593801\, {x}^{12}+320464\, {x}^{10}+113705\, {x}^{8}+
24470\, {x}^{6}+2774\, {x}^{4}+125\, {x}^{2}+1,\\
g_{1}=\widetilde{\phi}(P_{32}(4, 4, 23), x) = {x}^{32}&+31\, {x}^{30}+434\, {x}^{28}+3629\, {x}^{26}+20197\, {x}^{24}
+78917\, {x}^{22}+222534\, {x}^{20}+458396\, {x}^{18}\\+690471\, {x}^{16}
+&753971\, {x}^{14}+585871\, {x}^{12}+314249\, {x}^{10}+111032\, {x}^{8}+
24007\, {x}^{6}+2792\, {x}^{4}+132\, {x}^{2}+1.
\end{aligned}$$
So we have

$h_{1}g_{0}-h_{0}g_{1}= x \left( {x}^{4}+3\, {x}^{2}+1 \right)  \left( {x}^{12}+12\, {x}^{10}+53
\, {x}^{8}+107\, {x}^{6}+99\, {x}^{4}+34\, {x}^{2}+1 \right)>0
$ for all $x >0.$\\
By using computer we can also find
$$\mathbb{E}(H(0))\doteq 38.616923, \qquad  \mathbb{E}(G(0))\doteq 38.616742$$
So using Theorem \ref{thm3.1},  we have  $\mathbb{E}(P_{n}(2, 7, n-10))-\mathbb{E}(P_{n}(4, 4, n-9))=\mathbb{E}(H(n-31))-\mathbb{E}(G(n-31)) \geq \mathbb{E}(H(0))-\mathbb{E}(G(0))\doteq 0.000181>0$.
\end{proof}

\begin{thm} Let $n\ge 31$. Let $S_n'=S_n\backslash \{P_n(2,5,n-8), P_n(2,3,n-6), P_n(2,1,n-4)\}$ be the first $\lfloor\frac{n-9}{2}\rfloor$ trees in the quasi-order list (\ref{equ5.1}) of $S_n$. Then $P_n$ and the $\lfloor\frac{n-9}{2}\rfloor$ trees in $S_n'$ are the first $\lfloor\frac{n-7}{2}\rfloor$ largest energy trees in the class of all trees of order $n$.
\end{thm}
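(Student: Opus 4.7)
The plan is to partition all trees of order $n$ into the five classes (C1)--(C5) and to show that every tree outside $\{P_n\}\cup S_n'$ has energy strictly less than $\mathbb{E}(P_n(2,7,n-10))$, the smallest energy in $S_n'$ under the total quasi-order of Theorem \ref{thm5.1}. (That $P_n(2,7,n-10)$ really is the bottom of $S_n'$ in that chain uses $n\ge 31$: then $k\ge 14$, so $l=\lfloor(k-1)/2\rfloor\ge 6$, and the odd-indexed tail of (\ref{equ5.1}) reads $\ldots\succ P_n(2,7,*)\succ P_n(2,5,*)\succ P_n(2,3,*)\succ P_n(2,1,*)$.) Combined with Gutman's classical result that $P_n$ has the largest energy among trees of order $n$ and with the internal ordering of $S_n$ given by Theorem \ref{thm5.1}, this immediately identifies $\{P_n\}\cup S_n'$ as the top $\lfloor(n-7)/2\rfloor$ trees.

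The bounds for classes (C2)--(C4) are essentially formal. The three excluded members $P_n(2,5,n-8),\,P_n(2,3,n-6),\,P_n(2,1,n-4)$ of $S_n\setminus S_n'$ lie strictly below $P_n(2,7,n-10)$ in $\preccurlyeq$ by Theorem \ref{thm5.1}. For any $T\in\mbox{(C3)}$, Theorem \ref{thm5.2}(1) yields $T\preccurlyeq P_n(4,4,n-9)$, and Theorem \ref{thm5.5} gives $\mathbb{E}(P_n(4,4,n-9))<\mathbb{E}(P_n(2,7,n-10))$. For any $T\in\mbox{(C4)}$, Theorem \ref{thm5.2}(2) yields $T\preccurlyeq P_n(2,2,2,n-7)$, and Theorem \ref{thm5.3} together with $P_n(2,1,n-4)\prec P_n(2,7,n-10)$ gives $\mathbb{E}(T)\le \mathbb{E}(P_n(2,2,2,n-7))<\mathbb{E}(P_n(2,1,n-4))<\mathbb{E}(P_n(2,7,n-10))$.

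To handle class (C5) I would iterate Theorem \ref{thmrich}, which at each application replaces $T$ by some $T'\succ T$ with $N_3(T')=N_3(T)-1$ and $\Delta(T')=\Delta(T)$. If $\Delta(T)\ge 4$, the iteration is run down to $N_3=1$, landing in (C4) and reducing to the bound just established. If $\Delta(T)=3$, the iteration is stopped at $N_3=2$, producing a tree $T^*\succ T$ which, having maximum degree $3$ and exactly two branching vertices, must be of the form $T_n(a,b\,|\,c,d)$. If some coordinate of $(a,b,c,d)$ equals $1$, Theorem \ref{thm4.2} gives $T^*\prec P_n(2,1,n-4)$; otherwise either $T^*=T_n(2,2|2,2)$ or Theorem \ref{thmtabcd} gives $\mathbb{E}(T^*)<\mathbb{E}(T_n(2,2|2,2))$. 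In every subcase Theorem \ref{thm5.4} yields $\mathbb{E}(T^*)\le \mathbb{E}(P_n(2,1,n-4))<\mathbb{E}(P_n(2,7,n-10))$, completing the proof.

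All of the genuinely new analytic content---the direct energy comparisons in Theorems \ref{thm5.3}, \ref{thm5.4}, \ref{thm5.5} obtained via the method of \S 3, together with the quasi-order reductions in Theorems \ref{thm4.2}, \ref{thmtabcd} and \ref{thmrich}---has already been carried out, so the present proof is mostly an assembly job. The only point that deserves care is verifying that in class (C5) the iteration of Theorem \ref{thmrich} can be halted precisely at the $N_3$-value needed to invoke the \S 4 upper bounds; this is unproblematic because $N_3$ is a nonnegative integer that strictly decreases at each step while $\Delta$ is preserved, so the iteration may be stopped at any prescribed value of $N_3$ between $1$ and the initial $N_3(T)$.
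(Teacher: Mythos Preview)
Your proof is correct and follows essentially the same approach as the paper: the same (C1)--(C5) decomposition, the same target inequality $\mathbb{E}(T)<\mathbb{E}(P_n(2,7,n-10))$ for every $T\notin\{P_n\}\cup S_n'$, and the same supporting results (Theorems~\ref{thmrich}, \ref{thm4.2}, \ref{thmtabcd}, \ref{thm5.2}--\ref{thm5.5}). Your handling of (C5) is organized slightly differently---you split on $\Delta$ first and, when $\Delta\ge 4$, iterate Theorem~\ref{thmrich} all the way down to $N_3=1$ so as to land in (C4), whereas the paper keeps $N_3=2$ and instead lowers $\Delta$ to $3$ by total edge grafting to reach the $T_n(a,b\,|\,c,d)$ form---but this is a harmless (and slightly cleaner) variation.
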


\begin{proof}
It is obvious by the quasi-order list (\ref{equ5.1}) that the smallest energy tree in the set $\{P_n\}\cup S_n'$ is $P_n(2,7,n-10)$. Now take any tree $T\notin \{P_n\}\cup S_n'$ of order $n$, we consider the following four cases:

\noindent\textbf{Case }1: $T\in $(C2). Then $T\in S_n\backslash S_n'$. By the quasi-order list (\ref{equ5.1}) we have $T\prec P_n(2,7,n-10)$.

\noindent\textbf{Case }2: $T\in $(C3). Then by Theorem \ref{thm5.2} and Theorem \ref{thm5.5} we have
$$\mathbb{E}(T) \le \mathbb{E}(P_{n}(4, 4, n-9))< \mathbb{E}(P_{n}(2, 7, n-10)).$$

\noindent\textbf{Case }3: $T\in $(C4). Then by Theorem \ref{thm5.2}, \ref{thm5.3} and the list (\ref{equ5.1}) we have
$$\mathbb{E}(T) \le \mathbb{E}(P_{n}(2,2,2,n-7))< \mathbb{E}(P_{n}(2, 1, n-4))< \mathbb{E}(P_{n}(2, 7, n-10)).$$

\noindent\textbf{Case }4: $T\in $(C5).

\noindent\textbf{Subcase }4.1: $N_3(T)=2$ and $\Delta (T)=3$. Then $T$ is of the form $T_n(a,b|c,d)$. So by Theorem \ref{thm4.2}, \ref{thmtabcd}, \ref{thm5.4} and the list (\ref{equ5.1}) we have
$$\mathbb{E}(T) < \mathbb{E}(P_{n}(2, 1, n-4))< \mathbb{E}(P_{n}(2, 7, n-10)).$$

\noindent\textbf{Subcase }4.2: $N_3(T)=2$ and $\Delta (T)\ge 4$. Then a tree $T'$ with $N_3(T')=2$ and $\Delta(T')=3$ can be obtained from $T$ by using total edge grafting several times. So $T \prec T'$,  and thus by Subcase 4.1 we have $\mathbb{E}(T) < \mathbb{E}(T') < \mathbb{E}(P_{n}(2, 7, n-10))$.

\noindent\textbf{Subcase }4.3: $N_3(T)\geq 3$.  Using Theorem \ref{thmrich} several times we can obtain a tree $T'$ with $N_3(T')=2$ and $T \prec T'$. So by Subcases 4.1 and 4.2 we have   $\mathbb{E}(T) < \mathbb{E}(T') < \mathbb{E}(P_{n}(2, 7, n-10))$.
\end{proof}

\bibliographystyle{acm}

\begin{thebibliography}{10}

\bibitem{gutman1977ase}
{\sc Gutman, I.}
\newblock {Acyclic systems with extremal H{\"u}ckel $\pi$-electron energy}.
\newblock {\em Theoretical Chemistry Accounts: Theory, Computation, and
  Modeling (Theoretica Chimica Acta) 45}, 2 (1977), 79--87.

\bibitem{gutman1978eg}
{\sc Gutman, I.}
\newblock {The energy of a graph}.
\newblock {\em Ber. Math.-Statist. Sekt. Forsch. Graz 103\/} (1978), 1--22.

\bibitem{gutman2001ego}
{\sc Gutman, I.}
\newblock The energy of a graph: Old and new results.
\newblock {\em Algebraic Combinatorics and Applications, Springer-Verlag,
  Berlin\/} (2001), 196--211.

\bibitem{gutman1986mathematical}
{\sc Gutman, I., and Polansky, O.}
\newblock {\em Mathematical concepts in organic chemistry}.
\newblock Springer-Verlag Berlin, 1986.

\bibitem{gutman2008eet}
{\sc Gutman, I., Radenkovi{\'c}, S., Li, N., and Li, S.}
\newblock {Extremal energy trees}.
\newblock {\em MATCH Commun. Math. Comput. Chem 59}, 2 (2008), 315--320.

\bibitem{heilrecursion}
{\sc Heilbronner, E.}
\newblock Das komposition-prinzip: Eine anschauliche methode zur
  elektron-theoretischen behandlung nicht oder niedrig symme-trischer molekeln
  im rahmen der mo-theorie.
\newblock {\em HelV. Chim. Acta 36\/} (1953), 170--188.

\bibitem{huo2011complete}
{\sc Huo, B., Ji, S., Li, X., and Shi, Y.}
\newblock Complete solution to a conjecture on the fourthmaximal energy tree.
\newblock {\em MATCH Commun. Math. Comput. Chem. 66\/} (2011), 903--912.

\bibitem{li2008eet}
{\sc Li, N., and Li, S.}
\newblock {On the extremal energies of trees}.
\newblock {\em MATCH Commun. Math. Comput. Chem 59}, 2 (2008), 291--314.

\bibitem{shan+shao2010}
{\sc Shan, H.~Y., and Shao, J.~Y.}
\newblock Graph energy change due to edge grafting operations and its
  applications.
\newblock {\em MATCH Commun. Math. Comput. Chem 64\/} (2010), 25--40.

\bibitem{shangraftUOB}
{\sc Shan, H.~Y., Shao, J.~Y., Gong, F., and Liu, Y.}
\newblock An edge grafting theorem on the energy of unicyclic and bipartite
  graphs.
\newblock {\em Lin. Algebra Appl. 433\/} (2010), 547--556.

\end{thebibliography}

\end{document}